\numberwithin{equation}{section}
\begin{document}
\newcommand{\s}{\vspace{0.2cm}}

\newtheorem{teorema}{Theorem}[section]
\newtheorem{proposicao}[teorema]{Proposition}
\newtheorem{prop}[teorema]{Proposition}
\newtheorem{lema}[teorema]{Lemma}
\newtheorem{lem}[teorema]{Lemma}
\newtheorem{corolario}[teorema]{Corollary}
\newtheorem{assumption}[teorema]{Assumption}
\newtheorem{problem}[teorema]{Problem}
\newtheorem{observacao}[teorema]{Remark}
\newtheorem{rema}[teorema]{Remark}
\newtheorem{assertion}[teorema]{Assertion}
\newtheorem{result}[teorema]{Result}
\newtheorem{fact}[teorema]{Fact}
{\theoremstyle{definition}
\newtheorem{definicao}{Definition}}
\newtheorem*{theo*}{Theorem}
\newtheorem{exemplo}[teorema]{Example}
\newtheorem{rem}[teorema]{Remark}
\newtheorem{question}[teorema]{Question}
\newtheorem*{rem*}{Remark}
\newtheorem*{corollary*}{Corollary}

\title[Fermat function field lattices]{On lattices over Fermat function fields}

\author[Prando]{Rafael Froner Prando} 
\address{Instituto de Matem\'atica, Estat\'istica e Computa\c{c}\~ao Cient\'ifica, Universidade Estadual de Campinas, Campinas, SP 13083-859, Brazil.}
\email{r197034@dac.unicamp.br}

\author[Speziali]{Pietro Speziali}
\address{Instituto de Matem\'atica, Estat\'istica e Computa\c{c}\~ao Cient\'ifica, Universidade Estadual de Campinas, Campinas, SP 13083-859, Brazil.}
\email{speziali@unicamp.br}

\keywords{Algebraic function fields, Algebraic curves, Lattices, Minimum distance, Kissing number}
\subjclass[2020]{14G50, 14H05, 94B27, 11H31, 52C07, 52C17}

\begin{abstract}
Function field lattices are an interesting example of algebraically constructed lattices. Their minimum distance is bounded below by a function of the gonality of the underlying function field. Known explicit examples--coming mostly from elliptic and Hermitian curves--typically meet this lower bound. In this paper, we construct, for every integer $n \geqslant 4$, a new family of lattices arising from the Fermat function field $F_n$ and the set of its $3n$ total inflection points. These lattices have rank $3n-1$, and we show that their minimum distance equals $\sqrt{2n}$, thereby exceeding the classical bound $\sqrt{2\gamma(F_n)} = \sqrt{2(n-1)}$. We also determine their kissing number, which turns out to be independent of $n$, and analyze the structure of the second shortest vectors. Our results provide the first explicit examples of function field lattices of arbitrarily large rank whose minimum distance surpasses the expected bound, offering new geometric features of potential interest for coding-theoretic and cryptographic applications.
\end{abstract}




\maketitle
\thispagestyle{empty}

\section{Introduction}

Algebraic function fields of one variable (equivalently, non-singular algebraic curves) over finite fields have a long history of applications in coding theory and cryptography~\cite{goppa,stich}. Algebraic Geometric (Goppa) codes admit a lower bound on their minimum distance--the \emph{designed minimum distance}--as a direct consequence of the Riemann-Roch theorem (see~\cite[Chapter 2]{stich}). The rich structure of function fields often allows one to determine--or at least estimate--the relevant code parameters and further properties. The relevant literature on the subject is vast; here, we cite \cite{BD, CF, KN1, M,S} as concrete examples. 

Function fields can also be used to construct lattices. This construction was introduced independently by Rosenbloom--Tsfasman--Vladut~\cite{rosen,tsfas} around 1990. Let $F|K$ be a function field of one variable, and let $\mathcal P=\{P_1,\dots,P_n\}$ be a set of $K$-rational places of $F$. Consider the group
\[
 \mathcal O_{\mathcal P}^* \coloneqq \{ z \in F^* : \operatorname{supp}(z) \subseteq \mathcal P \},
\]
and the valuation map
\[
 \varphi_{\mathcal P} : \mathcal O_{\mathcal P}^* \longrightarrow \mathbb{Z}^n, \qquad
 z \longmapsto (v_{P_1}(z), \dots, v_{P_n}(z)).
\]
The image $\Lambda_{\mathcal P} \coloneqq \operatorname{Im}(\varphi_{\mathcal P})$ is a lattice in $\mathbb{R}^n$. The degree of a non-constant function in $\mathcal O_{\mathcal P}^*$ heavily influences the norm of the corresponding lattice vector. The \emph{gonality} $\gamma$ of $F|K$ is the minimum possible degree of such a function, and it implies the lower bound
\[
 d(\Lambda_{\mathcal P}) \geqslant \sqrt{2\gamma}.
\]
Thus, function field lattices have an analogue of a ``designed minimum distance.''

Explicit constructions of function field lattices are rare. In~\cite{maharaj} and~\cite{minsha}, lattices from elliptic curves are studied, while~\cite{bottcher} analyzes the Hermitian function field. These examples attain the lower bound, and in many cases, the resulting lattices are well-rounded. Further, these lattices are generated by their vectors of minimal length (see \cite[Theorem 3.3]{maharaj} and \cite[Theorem 6.2]{bottcher}). However, function field lattices are not well-rounded in general, as proved by Ate\c{s} and Stichtenoth~\cite{ates2}.

In this note, we exhibit what, to our knowledge, is the first example of a function field lattice with arbitrarily large rank whose minimal distance \emph{exceeds} the bound $\sqrt{2\gamma}$. For $n \geqslant 4$, we consider $F\coloneqq F_n$ to be the function field of the Fermat curve
\[
 X^n + Y^n=Z^n,
\]
defined over $\mathbb{C}$, and $\mathcal P$ to be the set of its $3n$ total inflection points (called ``points at infinity'' by R\"ohrlich~\cite{david}). Indeed, the Fermat function field $F_n$ has gonality $\gamma(F_n) = n-1$ (see \cite{hart}) and the minimum distance of our lattices is equal to $\sqrt{2n}$ (see Corollary \ref{cordismin}). Also, we provide the kissing number of our lattices. Interestingly, it does not depend on $n$, a feature that distinguishes our lattices significantly from previous constructions. Our construction also works for $n =3$. However, the Fermat function field $F_3$ is elliptic, whence $\Lambda_3$ falls under the more general construction of \cite{maharaj}. In particular, $\Lambda_3$ has minimum distance equal to $2$ and it is well-rounded. 

For applications in cryptography, such as wiretap channels~\cite[Section 3.3]{sueli}, it is desirable to know which norms are achieved by lattice vectors. Then, it is natural to try and determine the second shortest vectors and whether or not they are linearly independent with the minimal length vectors~\cite[Section 3.3]{sueli}. 
The algebraic properties inherited by algebraically constructed lattices are also very important. Ideal lattices \cite[Chapter 4]{sueli} and lattices constructed via the so-called twisted homomorphism \cite{jorge} are good examples, as well as, Function field lattices, as we shall see.

This paper is structured as follows. In Section \ref{secbackground}, we provide all the necessary background definitions on lattices, function fields, and on the Fermat curve and its ``points at infinity''. 

Section \ref{secfffl} is devoted to the construction of the lattice over the Fermat function fields; here we compute its relevant parameters, such as its minimum distance, second minimum distance, kissing number and volume.

Finally, in Section \ref{secfinalrem}, we make some final remarks on our results and further possible ramifications of our investigation.

\section{Background and preliminary results}\label{secbackground}
 Our terminology and notation are standard. For more details on lattice theory, the reader is referred to \cite{sueli}; for the theory of algebraic function fields of one variable and curves, we refer to \cite{stich} and \cite{hirs}, respectively.
 \subsection{Lattices}
 \begin{definicao}
 Given a positive integer $m$ and linearly independent vectors $v_1,\dots,v_m\in\mathbb R^n$, the lattice $\Lambda$ generated by $v_1,\dots,v_m$ is defined as
 \[\Lambda\coloneqq\{a_1v_1+\cdots+a_mv_m:a_1,\dots,a_m\in\mathbb Z\}.\]
 Alternatively, a subset $\Lambda\subseteq\mathbb R^n$ is a lattice if and only if it is a discrete additive subgroup of $\mathbb R^n$. We shall focus our study on the following lattice properties.
 \begin{itemize}
 \item Given a lattice $\Lambda$, the minimum distance of $\Lambda$ is
 \[d(\Lambda)\coloneqq\min_{0\neq x\in\Lambda}||x||,\]
 where $||\cdot||$ denotes the euclidean norm. The kissing number of $\Lambda$ is
 \[K(\Lambda)\coloneqq\left|\{x\in\Lambda:||x||=d(\Lambda)\}\right|.\]

 \item Let $\Lambda\subset\mathbb R^n$ be a lattice generated by $m\leqslant n$ linearly independent vectors. The rank of $\Lambda$ is defined as $\operatorname{rank}(\Lambda)\coloneqq m$. If $m=n$, we say that $\Lambda$ is a full-rank lattice. Furthermore, $\Lambda$ is said to be well-rounded if it contains $m$ linearly independent minimal length vectors.

 \item Given a lattice $\Lambda$ generated by column vectors $v_1,\dots,v_m\in\mathbb R^n$, its generator matrix is $B=[v_1|\cdots|v_m]$. From this, the volume of $\Lambda$ is defined as $V(\Lambda)=\sqrt{\det(B^TB)}$.
 \end{itemize}
 \end{definicao}
 \begin{proposicao}\cite[Page 18]{sueli}\label{propvolquot}
 Let $\Lambda$ be a lattice of rank $m$ and $\Lambda'$ a sublattice of $\Lambda$ with the same rank. The quotient group $\Lambda/\Lambda'$ has finite order given by
 \[|\Lambda/\Lambda'|=\frac{V(\Lambda)}{V(\Lambda')}.\]
 \end{proposicao}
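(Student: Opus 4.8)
The plan is to combine the structure theorem for finitely generated abelian groups with a single determinant computation, after first reducing to the full-rank case. Since $\Lambda$ and $\Lambda'$ have the same rank $m$, they span the same $m$-dimensional subspace $V\subseteq\mathbb R^n$; fixing an orthonormal basis of $V$ identifies $V$ isometrically with $\mathbb R^m$ and changes neither the Gram-determinant volumes $V(\Lambda),V(\Lambda')$ nor the abstract group $\Lambda/\Lambda'$. Hence we may assume $n=m$, so that $\Lambda'\subseteq\Lambda$ are both full-rank lattices in $\mathbb R^m$ and $V(\cdot)$ is simply the absolute value of the determinant of a generator matrix.

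First I would fix a $\mathbb Z$-basis $w_1,\dots,w_m$ of $\Lambda$ and express some $\mathbb Z$-basis of $\Lambda'$ through it, obtaining a matrix $A\in M_m(\mathbb Z)$; non-singularity of $A$ is exactly the hypothesis that $\Lambda'$ has full rank $m$. Applying the Smith normal form, there exist $U,W\in\mathrm{GL}_m(\mathbb Z)$ with $UAW=\operatorname{diag}(d_1,\dots,d_m)$ for positive integers $d_1\mid d_2\mid\cdots\mid d_m$. Reading $U,W$ as changes of basis, this produces a $\mathbb Z$-basis $u_1,\dots,u_m$ of $\Lambda$ for which $d_1u_1,\dots,d_mu_m$ is a $\mathbb Z$-basis of $\Lambda'$, so that
\[
\Lambda/\Lambda'\;\cong\;\bigoplus_{i=1}^{m}\mathbb Z/d_i\mathbb Z ,
\]
which is finite of order $d_1d_2\cdots d_m$.

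It remains to compare this order with the volumes. If $B$ is the generator matrix of $\Lambda$ with columns $u_1,\dots,u_m$, then $B\operatorname{diag}(d_1,\dots,d_m)$ is a generator matrix of $\Lambda'$, and by the reduction above
\[
V(\Lambda')=\lvert\det(B\operatorname{diag}(d_1,\dots,d_m))\rvert=(d_1d_2\cdots d_m)\,\lvert\det B\rvert=\lvert\Lambda/\Lambda'\rvert\cdot V(\Lambda),
\]
which is the asserted index--covolume identity. An equivalent, coordinate-free argument: once the index is known to be finite, choose coset representatives $g_1,\dots,g_k$ of $\Lambda/\Lambda'$; a fundamental parallelotope $\mathcal F$ of $\Lambda$ then tiles a fundamental parallelotope of $\Lambda'$ via the translates $g_j+\mathcal F$ ($1\le j\le k$), giving $V(\Lambda')=k\,V(\Lambda)$. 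I do not expect a genuine obstacle here, this being a classical fact; the only two points warranting an explicit line are the initial reduction to $\mathbb R^m$ and the remark that equal ranks force $\det A\neq 0$, so that every elementary divisor $d_i$ is strictly positive and the quotient is genuinely finite.
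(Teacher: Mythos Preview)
The paper does not supply its own proof of this proposition; it is simply quoted from \cite[Page~18]{sueli} and used as a black box (notably in the proof of Proposition~\ref{propbasicas}(e)). So there is nothing to compare your argument against except the bare statement.

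Your argument is correct and entirely standard: reduce to full rank in the common span, apply Smith normal form to identify the elementary divisors of $\Lambda/\Lambda'$, and read off the index as the product $d_1\cdots d_m$, which then matches the determinant ratio. The alternative tiling argument you sketch is also fine.

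One point worth flagging, since you conclude with ``which is the asserted index--covolume identity'': what you actually derive is
\[
|\Lambda/\Lambda'|=\frac{V(\Lambda')}{V(\Lambda)},
\]
i.e.\ the sublattice has the \emph{larger} covolume. The displayed formula in the proposition has the ratio upside down; this is a typo in the paper's statement, and indeed when the authors invoke the result in the proof of Proposition~\ref{propbasicas}(e) they use it in the correct orientation, writing $V(\Lambda_{\mathcal P})=V(A_{n-1})\cdot|A_{n-1}/\Lambda_{\mathcal P}|$. Your computation is right; just be aware that it does not literally match the formula as printed.
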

 \subsection{Function Fields}
	We now list some essential definitions and results which will allow us to construct function field lattices and study the relevant lattice properties. The reader can find proofs for these results in \cite[Chapter 1]{stich} and \cite[Section 3.7]{goppa}.
	\begin{definicao}
		Given a function field $F$ over a base field $K$, a place of $F|K$ is the principal and maximal ideal of a valuation ring $K\subsetneq A\subsetneq F$. The set of all places of $F$ is denoted by $\mathbb P_F$. Also, the positive integer $[A/P:K]$ is called the degree of $P$ and denoted by $\deg(P)$. If $\deg(P)=1$, we say $P$ is a $K$-rational place of $F$.
	\end{definicao}
	\begin{definicao}
		For each $P=\langle t\rangle\in\mathbb P_F$, we define a function $v_P:F^*\to\mathbb Z$ called the discrete valuation at $P$ as follows: given any $z\in F^*$, it can be written as $z=t^nu$ for some $n\in\mathbb Z$ and $u\in A^\times$, then $v_P(z)\coloneqq n$.
	\end{definicao}
	It follows from this definition that for all $a\in K^*$, $y,z\in F^*$ and $P\in\mathbb P_F$: $v_P(a)=0$ and $v_P(yz)=v_P(y)+v_P(z)$.
	\begin{definicao}
		The divisor group of a function field $F$, denoted by $\operatorname{Div}(F)$ is the free abelian group generated by the places of $F$. The degree of a divisor $D=\sum_{P\in\mathbb P_F}n_PP$ is $\deg (D)=\sum_{P\in\mathbb P_F}n_P\cdot\deg (P)$.
	\end{definicao}
	\begin{definicao}
		Let $z\in F$ and $P\in\mathbb P_F$. $P$ is a zero of $z$ if $v_P(z)>0$, and $P$ is a pole of $z$ if $v_P(z)<0$. If $Z\subseteq\mathbb P_F$ is the set of zeros of $z$ and $N\subseteq\mathbb P_F$ is the set of poles of $z$, we define the zero and pole divisors of $z$, respectively, as
		\[(z)_0\coloneqq \sum_{P\in Z}v_P(z)P\text{ and }(z)_\infty\coloneqq\sum_{P\in N}-v_P(z)P.\]
		We then say $(z)\coloneqq(z)_0-(z)_\infty$ is the principal divisor of $z$. Also, principal divisors always have degree zero (see \cite[Theorem 1.4.11]{stich}).
	\end{definicao}
 From this, the degree of a function $z\in F$ can also be defined.
	\begin{definicao}
		Given $z\in F$, its degree is given by $\deg(z)=\deg((z)_0)=\deg((z)_\infty)$, and can be computed as $\deg(z)=\frac12\sum_{P\in\mathbb P_F}|v_P(z)|$. Furthermore, the gonality $\gamma$ of a function field $F|K$ is defined as
		\[\gamma=\min\{\deg(z):z\text{ is a non-constant function in }F\}.\]
	\end{definicao}
 
 \begin{definicao}
 Let $\operatorname{Div}^0(F)$ denote the group of degree zero divisors of $F$ and $\operatorname{Princ}(F)$ the subgroup of $\operatorname{Div^0(F)}$ consisting of principal divisors. The quotient group
 \[\operatorname{Cl}^0(F)\coloneqq\operatorname{Div}^0(F)/\operatorname{Princ}(F)\]
 is well-defined and $h\coloneqq|\operatorname{Cl}^0(F)|$ is called the class number of $F$. Given $S$ a subset of $\mathbb P_F$, we denote by $\operatorname{Div}^0(S)$ and $\operatorname{Princ}(S)$ the corresponding groups whose divisors have support contained in $S$. We also write $h_S\coloneqq|\operatorname{Cl}^0(S)|$.
 \end{definicao}
	\begin{definicao}
		For a divisor $D\in\operatorname{Div}(F)$, the Riemann-Roch space associated to $D$ is
		\[\mathcal L(D)=\{z\in F:(z)\geqslant-D\}\cup\{0\},\]
		where $\geqslant$ is the partial order on $\operatorname{Div}(F)$ obtained by comparing divisors coefficient-wise. The dimension of $\mathcal L(D)$ as a $K$-vector space is denoted by $\ell(D)$.
	\end{definicao}
	\begin{definicao}
		The genus $g$ of a function field $F|K$ is defined as
		\[g\coloneqq\max\{\deg (D)-\ell(D)+1:D\in\operatorname{Div}(F)\}.\]
	\end{definicao}
	The theory of algebraic function fields of one variable is equivalent to the one of non-singular algebraic curves. More precisely:
	\begin{teorema}
		Let $F|K$ be an algebraic function field of one variable. There exists a non-singular projective curve $\mathcal C$ whose function field $K(\mathcal C)$ is ($K$-isomorphic to) $F$.
	\end{teorema}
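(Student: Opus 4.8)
The plan is to produce an explicit plane model of $F$, pass to its projective closure, and then remove the singularities by normalization, keeping track of the function field at every step.

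First I would choose $x \in F \setminus K$. Since $F|K$ has transcendence degree one, $x$ is transcendental over $K$ and $[F:K(x)] < \infty$. In every case of interest here the base field $K$ is perfect (we have $K = \mathbb{C}$, or a finite field), so $F|K$ is separably generated and, after replacing $x$ by a separating transcendence element if necessary, $F|K(x)$ is a finite separable extension. By the primitive element theorem, $F = K(x)(y)$ for some $y$, and multiplying $y$ by a suitable element of $K[x]$ we may assume $y$ is integral over $K[x]$. Let $f(X,Y) \in K[X,Y]$ be the minimal polynomial of $y$ over $K(x)$, cleared of denominators and of content; it is irreducible in $K[X,Y]$ by Gauss's lemma, and the affine plane curve $\mathcal{C}_0 = \{f = 0\} \subseteq \mathbb{A}^2$ has coordinate ring $K[x,y]$ and function field $F$. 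Homogenizing $f$ gives the projective closure $\overline{\mathcal{C}}_0 \subseteq \mathbb{P}^2$, an irreducible projective curve with function field $F$, possibly singular.

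Next I would take the normalization $\pi \colon \mathcal{C} \to \overline{\mathcal{C}}_0$, obtained by gluing the spectra of the integral closures of the affine coordinate rings of the standard charts of $\overline{\mathcal{C}}_0$ inside $F$. Three standard facts then conclude the argument: (i) the integral closure of a finitely generated $K$-algebra domain inside a finite extension of its fraction field is again a finite module over that algebra (finiteness of integral closure), so $\pi$ is a finite morphism and $\mathcal{C}$, being finite over a projective variety, is projective; (ii) $\pi$ is birational, hence induces a $K$-isomorphism $K(\mathcal{C}) \xrightarrow{\ \sim\ } F$; and (iii) a one-dimensional Noetherian normal domain is a Dedekind domain, so every local ring of $\mathcal{C}$ at a closed point is a discrete valuation ring, i.e. $\mathcal{C}$ is nonsingular.

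The main obstacle is fact (i): the finiteness of the integral closure. In the separable situation above it follows from the classical trace-form argument, which places the integral closure inside a finitely generated module over $K[x]$ (or over a Noether normalization); if one insisted on proving the theorem in full generality over an imperfect field, one would instead invoke Krull--Akizuki together with a further Noether-normalization step. Once finiteness is available, the gluing of the normalized charts into a genuine projective scheme is routine. As an alternative route one can bypass the plane model and define $\mathcal{C}$ directly as the set of all places of $F|K$ with the cofinite topology and structure sheaf $U \mapsto \bigcap_{P \in U} \mathcal{O}_P$, and then verify that this abstract nonsingular curve embeds into some $\mathbb{P}^N$; but the normalization route stays closest to the explicit defining equations used throughout this paper.
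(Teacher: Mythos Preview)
Your argument is essentially correct and follows the standard normalization route (plane model $\rightarrow$ projective closure $\rightarrow$ integral closure), with the delicate point---finiteness of the integral closure---correctly isolated and justified via the trace pairing in the separable case, with Krull--Akizuki mentioned for the general imperfect situation. One small quibble: you silently specialise to perfect $K$ in the body of the argument even though the statement is for arbitrary $K$; since the paper ultimately only needs $K=\mathbb{C}$ this is harmless, but it would be cleaner either to state the restriction up front or to run the Krull--Akizuki argument throughout.

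As for comparison: the paper does not prove this theorem at all. It is quoted as a standard background fact, with the reader directed to \cite[Chapter~1]{stich} and \cite[Section~3.7]{goppa} for details. Your sketch therefore supplies more than the paper does; the approach you outline is precisely the one found in the cited references (and in Hartshorne, Chapter~I, \S6), so there is no divergence in method, only in the level of detail provided.
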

	This theorem allows us to translate some concepts from function fields to algebraic curves. For example
	\begin{itemize}
		\item The genus of a function field is the same as the geometric genus of its associated algebraic curve.
		\item Every point $P\in \mathcal C$ corresponds to a place $P\in\mathbb P_F$. In particular, if $\mathcal C$ is a curve over the field $K$, then the points of $\mathcal C$ with entries in $K$ correspond to the $K$-rational places of $F$.
		\item If $\mathcal C$ is non-singular, a divisor of $\mathcal C$ is a formal sum of points with integer coefficients $D=\sum_{P\in \mathcal C}n_PP$ and almost all $n_P=0$.
 \item The gonality of a function field is the same as the gonality of its associated algebraic curve.
	\end{itemize}
 \begin{definicao}
 Given a non-singular plane curve $\mathcal C$ of degree $m\geqslant4$ and a divisor $D\geqslant0$ of $\mathcal C$, its specialty index $i(D)$ is the number of linearly independent adjoint curves of degree $m-3$ passing through $D$.
 \end{definicao}
 \begin{observacao}
 One can still compute the specialty index for a divisor $D\geqslant0$ of a plane singular algebraic curve using canonical adjoint curves (see \cite{hirs}).
 \end{observacao}
	\begin{teorema}[Riemann-Roch]
		Let $D\geqslant0$ be a divisor of the curve $\mathcal C$. Then
		\[\ell(D)=\deg (D)-g+1+i(D).\]
	\end{teorema}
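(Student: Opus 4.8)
The plan is to derive this projective form of the theorem from the abstract Riemann--Roch theorem for the function field $F=K(\mathcal C)$, namely $\ell(D)-\ell(W-D)=\deg(D)-g+1$ for every divisor $D$ and a fixed canonical divisor $W$ (see \cite[Chapter 1]{stich}). Granting that, the remaining task is purely a dictionary: one must show that the specialty index $i(D)$, defined geometrically through adjoint curves of degree $m-3$, coincides with $\ell(W-D)$.

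First I would pin down a convenient canonical divisor. For a smooth plane curve $\mathcal C\subset\mathbb{P}^2$ of degree $m$, the adjunction formula gives $W_{\mathcal C}=(K_{\mathbb{P}^2}+\mathcal C)|_{\mathcal C}$, so a canonical divisor is cut out on $\mathcal C$ by a curve of degree $m-3$. Concretely, fix a homogeneous polynomial $\phi_0$ of degree $m-3$ that does not vanish identically on $\mathcal C$ (possible since $\mathcal C$ is irreducible of degree $m>m-3$), and let $W\coloneqq(\phi_0)_{\mathcal C}\geqslant0$ be the intersection divisor it cuts out; by B\'ezout $\deg W=m(m-3)=2g-2$, as it must be for a canonical divisor, consistent with $g=\binom{m-1}{2}$.

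Next I would build the linear isomorphism. Let $V$ be the $K$-vector space of homogeneous polynomials of degree $m-3$, and $V(D)\subseteq V$ the subspace of those $\psi$ with $(\psi)_{\mathcal C}\geqslant D$. Since no nonzero element of $V$ is divisible by the defining equation of $\mathcal C$, restriction to $\mathcal C$ is injective on $V$, so $i(D)=\dim_K V(D)$ by definition (for a smooth curve the adjoint conditions are vacuous, so adjoint curves of degree $m-3$ are just all such curves). The assignment $\psi\mapsto\psi/\phi_0$ then embeds $V$ into $F$, with $(\psi/\phi_0)=(\psi)_{\mathcal C}-W$; hence $\psi/\phi_0\in\mathcal L(W-D)$ precisely when $(\psi)_{\mathcal C}\geqslant D$, i.e. when $\psi\in V(D)$. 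For the converse, every $f\in\mathcal L(W-D)$ satisfies $(f)+W\geqslant D\geqslant0$, so $f\cdot(\phi_0|_{\mathcal C})$ is a global section of $\mathcal O_{\mathcal C}(m-3)$; since the linear series cut on $\mathcal C$ by curves of degree $m-3$ is complete, this section is the restriction of some $\psi\in V$, necessarily in $V(D)$, and $f=\psi/\phi_0$. Thus $\psi\mapsto\psi/\phi_0$ is a $K$-linear isomorphism $V(D)\xrightarrow{\ \sim\ }\mathcal L(W-D)$, so $i(D)=\ell(W-D)$. Substituting into the abstract Riemann--Roch theorem gives $\ell(D)=\deg(D)-g+1+i(D)$, as desired.

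The main obstacle is exactly this identification $i(D)=\ell(W-D)$: it rests on (i) the adjunction formula, to recognize a degree-$(m-3)$ intersection divisor as canonical, and (ii) the completeness of the linear series cut by degree-$(m-3)$ curves on $\mathcal C$, used in the converse step, which is where a small cohomological input (the vanishing of $H^1(\mathbb{P}^2,\mathcal O(k))$) or an equivalent classical argument in the style of Max Noether is needed. For the singular case mentioned in the preceding remark, one works instead on the normalization of $\mathcal C$ and takes $W$ to be the divisor cut by adjoint curves of degree $m-3$ satisfying the prescribed Brill--Noether multiplicity conditions at the singular points; the same isomorphism argument then applies verbatim.
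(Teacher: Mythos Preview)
The paper does not actually prove this statement: it is listed among the background results in Section~\ref{secbackground} with the blanket reference ``The reader can find proofs for these results in \cite[Chapter 1]{stich} and \cite[Section 3.7]{goppa}.'' So there is no proof in the paper to compare against.

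Your sketch is the standard Brill--Noether route and is sound as an outline. You correctly reduce the geometric statement to the abstract Riemann--Roch identity $\ell(D)-\ell(W-D)=\deg(D)-g+1$ and then identify $i(D)$ with $\ell(W-D)$ via the map $\psi\mapsto\psi/\phi_0$. You are also right that the only non-formal step is the surjectivity of this map, equivalently the completeness of the linear series cut on $\mathcal C$ by curves of degree $m-3$; this is precisely where one invokes either the vanishing of $H^1(\mathbb P^2,\mathcal O(k))$ for $k\geqslant -2$ together with the restriction exact sequence, or the classical Max Noether ``$AF+BG$'' argument. One small caveat: in the paper's setting the specialty index is defined only for effective $D$ and only for smooth plane curves of degree $m\geqslant 4$ (so that $m-3\geqslant 1$ and the notion of adjoint curves of degree $m-3$ makes sense), which your argument respects; your closing remark about the singular case via the normalization and Brill--Noether adjoint conditions is accurate but goes beyond what the paper needs.
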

	The last results of this section are related to methods of computing $\ell(D)$.
	\begin{definicao}
		Let $\mathcal F$, $\mathcal G$ be two plane algebraic curves and $Q$ be a non-singular point on both curves. The positive integer $I(\mathcal F\cap\mathcal G,Q)$ denotes the intersection multiplicity of $\mathcal F$ and $\mathcal G$ at $Q$. In addition, if $\mathcal F$ and $\mathcal G$ do not share a component, then $\mathcal F\cap\mathcal G=\{Q_1,\dots, Q_m\}$ and 
		\[\mathcal F\cdot\mathcal G\coloneqq\sum_{i=1}^mI(\mathcal F\cap\mathcal G,Q_i)\cdot Q_i\]
		is the intersection divisor of $\mathcal F$ and $\mathcal G$. In this case, we say that $\mathcal F$ `cuts out' the divisor $\mathcal F\cdot\mathcal G$ on $\mathcal G$.
	\end{definicao}
	\begin{teorema}[Bézout Theorem]\label{teobezout}
		Let $\mathcal F$ and $\mathcal G$ be plane algebraic curves of degree $m$ and $n$, respectively. If $\mathcal F$ and $\mathcal G$ do not share any common component, then
		\[\sum_{P\in\mathcal F\cap\mathcal G}I(\mathcal F\cap\mathcal G,P)\leqslant mn.\]
	\end{teorema}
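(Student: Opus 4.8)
The plan is to obtain the inequality from the theory of resultants, which is the most hands-on route. Choose homogeneous polynomials $F,G\in\mathbb C[X,Y,Z]$ of degrees $m$ and $n$ cutting out $\mathcal F$ and $\mathcal G$. After a projective change of coordinates we may assume that the point $[0:0:1]$ lies on neither curve; then, viewed as polynomials in $Z$ with coefficients in $\mathbb C[X,Y]$, $F$ and $G$ have degrees exactly $m$ and $n$ and nonzero constant leading coefficients. Set $R(X,Y)\coloneqq\operatorname{Res}_Z(F,G)\in\mathbb C[X,Y]$.

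Next I would record the standard facts about $R$: it is homogeneous of degree $mn$; it is not identically zero, because $F$ and $G$ share no irreducible factor (equivalently, no common component); and for $[a:b]\in\mathbb P^1$ one has $R(a,b)=0$ if and only if $F(a,b,Z)$ and $G(a,b,Z)$ have a common root in $Z$, i.e. if and only if the line through $[0:0:1]$ and $[a:b:0]$ meets $\mathcal F\cap\mathcal G$. A nonzero homogeneous form of degree $mn$ in two variables has at most $mn$ zeros in $\mathbb P^1$ counted with multiplicity; after one further generic choice of coordinates we may also assume that the linear projection away from $[0:0:1]$ is injective on the finite set $\mathcal F\cap\mathcal G$, which already yields $\#(\mathcal F\cap\mathcal G)\leqslant mn$. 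The sharper assertion follows once the multiplicity of each zero of $R$ is compared with the local intersection multiplicities on the corresponding fibre.

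The delicate point --- which I expect to be the main obstacle --- is precisely this local-to-global comparison: for a zero $[a:b]$ of $R$ one must show that its multiplicity as a root of $R$ dominates $\sum_{P}I(\mathcal F\cap\mathcal G,P)$, the sum running over the intersection points $P$ on the line through $[0:0:1]$ and $[a:b:0]$, where $I(\mathcal F\cap\mathcal G,P)=\dim_{\mathbb C}\mathcal O_{\mathbb P^2,P}/(F,G)$; summing over the at most $mn$ zeros of $R$ then gives $\sum_P I(\mathcal F\cap\mathcal G,P)\leqslant\deg R=mn$. This matching of multiplicities can be sidestepped by using the graded (Koszul) form of Bézout instead: since $F,G$ form a regular sequence in $S\coloneqq\mathbb C[X,Y,Z]$, the complex
\[0\longrightarrow S(-m-n)\longrightarrow S(-m)\oplus S(-n)\longrightarrow S\longrightarrow S/(F,G)\longrightarrow 0\]
is exact, and comparing graded dimensions in degree $d\gg 0$ shows that the Hilbert polynomial of $S/(F,G)$ is the constant $mn$, which is the length of the intersection scheme and hence equals $\sum_P\dim_{\mathbb C}\mathcal O_{\mathbb P^2,P}/(F,G)$. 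Both routes are entirely standard, so in practice I would simply cite a reference such as \cite{hirs}; only the displayed inequality is needed in the sequel, which is why the weaker form suffices.
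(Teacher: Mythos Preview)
Your sketch is correct and follows one of the standard routes to B\'ezout's theorem; the resultant argument and the Koszul/Hilbert polynomial argument you outline are both valid, and your identification of the local multiplicity comparison as the delicate step is accurate. However, there is nothing to compare against: the paper does not prove this theorem. It is stated in Section~\ref{secbackground} as a classical background result, with no proof given, and is used only as a black box (chiefly to force a line to be a component of a curve once it meets that curve in more points than the product of degrees allows). So your proposal is sound but goes well beyond what the paper does, which is simply to cite the statement.
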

	This theorem shows that if we can find curves $\mathcal F$ and $\mathcal G$ of degrees $m$ and $n$, respectively, such that the sum of the intersection multiplicities exceeds $mn$, they must have a common component.
	\begin{lema}\cite[Section 3.7]{goppa}\label{lemmadivint}
		Let $\mathcal F$ be a non-singular plane curve, $D\geqslant0$ a divisor of $\mathcal F$, and $\mathcal G$ a plane algebraic curve of degree $m$ such that
		\[\mathcal G\cdot\mathcal F=D+R.\]
		Then $\ell(D)$ is the dimension of the linear system of curves of degree $m$ passing through $R$, which is called the residue divisor. Furthermore, if $\mathcal H$ is a curve of degree $m$ passing through $R$, then $\frac{\mathcal H}{\mathcal G}$ is an element of $\mathcal L(D)$.
	\end{lema}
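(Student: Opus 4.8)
The plan is to prove both assertions at once by constructing an explicit $K$-linear surjection $\Psi\colon V \longrightarrow \mathcal L(D)$, $\Phi\mapsto \Phi/\mathcal G$, where $V$ is the space of degree-$m$ forms passing through the residue divisor $R$. Fix notation: let $F_{\mathcal F}$ be the defining form of $\mathcal F$ and $d=\deg \mathcal F$; since $\mathcal F$ is nonsingular it is irreducible, and $\mathcal G\cdot\mathcal F$ has degree $md$ by Bézout (Theorem \ref{teobezout}), so $\deg D+\deg R=md$.

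\emph{Well-definedness and the ``furthermore'' clause.} For a degree-$m$ form $H$ with $F_{\mathcal F}\nmid H$, the quotient $\mathcal H/\mathcal G$ is an honest rational function on $\mathcal F$ (a ratio of forms of equal degree), and comparing valuations point by point shows that its principal divisor on $\mathcal F$ is $(\mathcal H/\mathcal G)=\mathcal H\cdot\mathcal F-\mathcal G\cdot\mathcal F=\mathcal H\cdot\mathcal F-(D+R)$. Hence, if $\mathcal H$ passes through $R$ --- that is, $\mathcal H\cdot\mathcal F\geqslant R$ --- then $(\mathcal H/\mathcal G)\geqslant -D$, so $\mathcal H/\mathcal G\in\mathcal L(D)$; this is exactly the last statement of the lemma, and it simultaneously shows that $\Psi$ maps $V$ into $\mathcal L(D)$, $K$-linearly, with kernel precisely the degree-$m$ multiples of $F_{\mathcal F}$ (the forms vanishing on all of $\mathcal F$).

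\emph{Surjectivity --- the crux.} Let $0\neq f\in\mathcal L(D)$ and write $f=g_1/g_2$ with $g_1,g_2$ forms of a common degree $e$ and $F_{\mathcal F}\nmid g_2$ (then also $F_{\mathcal F}\nmid g_1$, since $f\neq 0$). The hypothesis $(f)=g_1\cdot\mathcal F-g_2\cdot\mathcal F\geqslant -D$, together with $\mathcal G\cdot\mathcal F=D+R$, yields the pointwise order inequality $(\mathcal G g_1)\cdot\mathcal F=(D+R)+g_1\cdot\mathcal F\geqslant g_2\cdot\mathcal F+R\geqslant g_2\cdot\mathcal F$ along $\mathcal F$. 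Because $\mathcal F$ is nonsingular, this is precisely the local hypothesis under which Max Noether's Fundamental Theorem (the $AF+BG$ theorem) applies to the curves $\mathcal F$, $\{g_2=0\}$ and the form $\mathcal G g_1$: there exist forms $A,B$ with $\mathcal G g_1=A\,F_{\mathcal F}+B\,g_2$, and homogeneity forces $\deg B=m$. Restricting to $\mathcal F$ annihilates $A\,F_{\mathcal F}$, so $B/\mathcal G=g_1/g_2=f$ on $\mathcal F$; moreover $B\cdot\mathcal F=(D+R)+(f)\geqslant R$, so $B\in V$. Thus $\Psi(B)=f$, proving surjectivity.

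Consequently $\mathcal L(D)\cong V/\ker\Psi$, which is the desired identification: $\ell(D)$ equals the dimension of the linear system of degree-$m$ curves through $R$ (forms through $R$ taken modulo those vanishing on $\mathcal F$), and any degree-$m$ curve $\mathcal H$ through $R$ gives rise to the element $\mathcal H/\mathcal G\in\mathcal L(D)$. The main obstacle is the surjectivity step, and within it the appeal to Noether's theorem: one must verify its local conditions at each point of $\mathcal F\cap\{g_2=0\}$, which is where nonsingularity of $\mathcal F$ enters, reducing those conditions exactly to the valuation inequality $v_P\big((\mathcal G g_1)|_{\mathcal F}\big)\geqslant v_P\big(g_2|_{\mathcal F}\big)$ established above; the remaining manipulations are routine divisor- and degree-bookkeeping.
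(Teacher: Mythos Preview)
The paper does not give its own proof of this lemma; it is stated as a citation to Goppa's book \cite[Section 3.7]{goppa} and used as a black box throughout Section~\ref{secfffl}. So there is no ``paper's proof'' to compare against.

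Your argument is the standard and correct one: the map $\Psi\colon V\to\mathcal L(D)$, $\Phi\mapsto\Phi/\mathcal G$, is easily seen to be well-defined (this is the ``furthermore'' clause), and surjectivity is exactly Max Noether's $AF+BG$ theorem, whose local hypotheses at points of $\mathcal F\cap\{g_2=0\}$ reduce---because $\mathcal F$ is nonsingular---to the valuation inequality $(\mathcal G g_1)\cdot\mathcal F\geqslant g_2\cdot\mathcal F$ that you establish. One small remark: your identification $\ell(D)=\dim(V/\ker\Psi)$ is more precise than the paper's bald assertion that $\ell(D)$ equals the dimension of the linear system of degree-$m$ curves through $R$; the two agree only when $m<\deg\mathcal F$, so that no degree-$m$ form is divisible by $F_{\mathcal F}$. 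This is harmless here, since every application in the paper uses auxiliary curves of degree $m\leqslant 3<n=\deg\mathcal F_n$, but it is worth being aware that the lemma as stated tacitly assumes this.
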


\subsection{Construction of lattices over function fields}
	A lattice over the function field $F|K$ is constructed as follows: take $n$ $K$-rational places of $F$ to form a set
	\[\mathcal P\coloneqq\{P_1,\dots,P_n\}\subseteq\mathbb P_F.\]
 From this, construct the set of functions
	\[\mathcal O_{\mathcal P}^*\coloneqq\{z\in F^*:\operatorname{supp}(z)\subseteq\mathcal P\},\]
	where $\operatorname{supp}(z)$ denotes the set of places that are either zeros or poles of $z$. It follows that $\mathcal O_\mathcal P^*$ is an abelian group with respect to multiplication. Thus, the map
	\begin{align*}
		\varphi_\mathcal P\colon(\mathcal O_\mathcal P^*,\cdot)&\to(\mathbb Z^n,+)\\
		z&\mapsto(v_{P_1}(z),\dots,v_{P_n}(z))
	\end{align*}
	is a well-defined group homomorphism. The discrete additive subgroup of $\mathbb R^n$ given by $\Lambda_\mathcal P\coloneqq\operatorname{Im}(\varphi_\mathcal P)$ is called the lattice over the function field $F|K$ generated by $\mathcal P$.
 
 In an effort to make this paper as self-contained as possible, we provide short proofs for the following basic properties. One can find more detailed arguments in \cite{ates}.
	\begin{proposicao}\label{propbasicas}
		Let $A_{n-1}=\left\{(x_1,\dots,x_n)\in\mathbb Z^n:\sum_{i=1}^nx_i=0\right\}$, then
		\begin{itemize}
			\item[(a)] $\Lambda_\mathcal P$ is a sublattice of $A_{n-1}$, thus, it is not a full-rank lattice.
			\item[(b)] $\Lambda_\mathcal P$ is an even lattice, that is, $||x||^2$ is an even integer for all $x\in\Lambda_\mathcal P$.
			\item[(c)] For all $z\in\mathcal O_\mathcal P^*$, $||z||\coloneqq||\varphi_\mathcal P(z)||\geqslant\sqrt{2\cdot\deg(z)}$, thus the minimum distance of $\Lambda_\mathcal P$ satisfies $d(\Lambda_\mathcal P)\geqslant\sqrt{2\gamma}$.
 \end{itemize}
 Additionally, if $\operatorname{Cl}^0(\mathcal P)$ is finite, then
 \begin{itemize}
 \item[(d)] $\operatorname{rank}(\Lambda_\mathcal P)=n-1$.
			\item[(e)] The volume of $\Lambda_\mathcal P$ is given by $V(\Lambda_\mathcal P)=\sqrt n\cdot h_\mathcal P$.
		\end{itemize}
	\end{proposicao}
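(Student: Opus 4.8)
The plan is to treat the five items in sequence, the common thread being that $\Lambda_\mathcal P$ should be identified with the group of principal divisors of $F$ supported on $\mathcal P$. For (a), note that if $z\in\mathcal O_\mathcal P^*$ then $(z)=\sum_{i=1}^n v_{P_i}(z)\,P_i$ because $\operatorname{supp}(z)\subseteq\mathcal P$; since each $P_i$ is $K$-rational we get $\deg((z))=\sum_{i=1}^n v_{P_i}(z)$, and this is $0$ as principal divisors have degree zero. Hence $\varphi_\mathcal P(z)\in A_{n-1}$, so $\Lambda_\mathcal P\subseteq A_{n-1}$, and since $A_{n-1}$ has rank $n-1<n$ the lattice $\Lambda_\mathcal P$ cannot be full-rank.

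For (b) and (c) the key elementary remark is that for every integer $v$ one has $v^2\geqslant|v|$ and $v^2\equiv v\pmod 2$. Writing $x=\varphi_\mathcal P(z)$, item (a) gives $\|x\|^2=\sum_{i=1}^n v_{P_i}(z)^2\equiv\sum_{i=1}^n v_{P_i}(z)=0\pmod 2$, which is (b). For (c), the same inequality gives $\|x\|^2\geqslant\sum_{i=1}^n|v_{P_i}(z)|$; since $\operatorname{supp}(z)\subseteq\mathcal P$, the identity $\deg(z)=\tfrac12\sum_{P\in\mathbb P_F}|v_P(z)|$ collapses to $\deg(z)=\tfrac12\sum_{i=1}^n|v_{P_i}(z)|$, whence $\|x\|\geqslant\sqrt{2\deg(z)}$. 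For the bound on $d(\Lambda_\mathcal P)$, note $\ker\varphi_\mathcal P=K^*$ (a function with empty support is constant), so every nonzero vector of $\Lambda_\mathcal P$ equals $\varphi_\mathcal P(z)$ for a non-constant $z$, and such $z$ satisfies $\deg(z)\geqslant\gamma$.

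For (d) and (e) I would make the above identification precise. Under $\operatorname{Div}(\mathcal P)\cong\mathbb Z^n$, $P_i\mapsto e_i$, the subgroup $\operatorname{Div}^0(\mathcal P)$ corresponds to $A_{n-1}$ and $\operatorname{Princ}(\mathcal P)$ corresponds exactly to $\Lambda_\mathcal P$: indeed $\varphi_\mathcal P(z)$ is the coordinate vector of $(z)$ for $z\in\mathcal O_\mathcal P^*$, and conversely any principal divisor supported on $\mathcal P$ arises this way. Therefore $A_{n-1}/\Lambda_\mathcal P\cong\operatorname{Div}^0(\mathcal P)/\operatorname{Princ}(\mathcal P)=\operatorname{Cl}^0(\mathcal P)$, which is finite of order $h_\mathcal P$ by hypothesis; finiteness of this quotient forces $\operatorname{rank}(\Lambda_\mathcal P)=\operatorname{rank}(A_{n-1})=n-1$, proving (d). For (e), applying Proposition~\ref{propvolquot} to the pair $\Lambda_\mathcal P\subseteq A_{n-1}$ yields $V(\Lambda_\mathcal P)=h_\mathcal P\cdot V(A_{n-1})$, and I would finish by recalling the standard fact $V(A_{n-1})=\sqrt n$ — for instance from the basis $e_1-e_2,\dots,e_{n-1}-e_n$, whose Gram matrix is the $A_{n-1}$ Cartan matrix, of determinant $n$.

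None of the steps is genuinely hard; the only points demanding care are the clean identification $\Lambda_\mathcal P\cong\operatorname{Princ}(\mathcal P)$ underpinning (d) and (e) — in particular that the hypothesis ``$\operatorname{Cl}^0(\mathcal P)$ finite'' is exactly what upgrades the conclusion of (a) from ``$\operatorname{rank}(\Lambda_\mathcal P)\leqslant n-1$'' to ``$=n-1$'' — and correctly pinning down the covolume $V(A_{n-1})=\sqrt n$.
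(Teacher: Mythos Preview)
Your proposal is correct and follows essentially the same route as the paper: items (a)--(c) are argued identically, and (e) uses the same identification $A_{n-1}/\Lambda_\mathcal P\cong\operatorname{Cl}^0(\mathcal P)$ together with Proposition~\ref{propvolquot} and $V(A_{n-1})=\sqrt n$. The only minor difference is in (d): the paper exhibits $n-1$ explicit linearly independent vectors $\varphi_\mathcal P(z_i)$ coming from the principal divisors $h_\mathcal P(P_1-P_i)$, whereas you deduce equality of ranks directly from the finiteness of $A_{n-1}/\Lambda_\mathcal P$ --- your argument is slightly slicker and reuses the identification needed for (e).
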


 \begin{proof}
 \begin{itemize}
 \item[(a)] Since all principal divisors have degree $0$, it follows that
 \[0=\deg((z))=\sum_{i=1}^nv_{P_i}(z)\implies\varphi_\mathcal P(z)\in A_{n-1}.\]
 Given that $\operatorname{rank}(A_{n-1})=n-1<n$, it follows that $\operatorname{rank}(\Lambda_\mathcal P)\leqslant\operatorname{rank}(A_{n-1})<n$, therefore $\Lambda_\mathcal P$ is not a full-rank lattice.
 \item[(b)] This follows from (a) and the fact that $k^2\equiv k\pmod 2$ for any $k\in\mathbb Z$.
 \item[(c)] Write the principal divisor of $z\in\mathcal O_\mathcal P^*$ as
 \[(z)=(z)_0-(z)_\infty=(b_1Q_1+\cdots+b_sQ_s)-(c_1R_1+\cdots+c_tR_t),\]
 where $Q_i,R_j\in\mathcal P$ are distinct places and $b_i,c_j$ are positive integers for $i=1\dots,s$ and $j=1\dots,t$. Then
 \[||\varphi_\mathcal P(z)||^2=\sum_{i=1}^sb_i^2+\sum_{j=1}^tc_j^2\geqslant\sum_{i=1}^sb_i+\sum_{j=1}^tc_j=2\deg(z).\]
 Since $\gamma\leqslant\deg(z)$ for every non-constant function $z\in\mathcal O_\mathcal P^*$, it follows that $d(\Lambda_\mathcal P)\geqslant\sqrt{2\gamma}$.
 \item[(d)] Consider the divisors $P_1-P_i\in\operatorname{Div}^0(\mathcal P)$ for $i=2,\dots,n$. Since $h_\mathcal P$ is the order of $\operatorname{Cl}^0(\mathcal P)$, $h_\mathcal PP_1-h_\mathcal PP_i\in\operatorname{Princ}(\mathcal P)$ whence $h_\mathcal PP_1-h_\mathcal PP_i=(z_i)$ for some $z_i\in \mathcal O_\mathcal P^*$. The corresponding images of these functions are
 \begin{align*}
 \varphi_\mathcal P(z_2)&=(h_\mathcal P,-h_\mathcal P,0,\dots,0)\\
 \varphi_\mathcal P(z_3)&=(h_\mathcal P,0,-h_\mathcal P,\dots,0)\\
 &\hspace{.23cm}\vdots\\
 \varphi_\mathcal P(z_n)&=(h_\mathcal P,0,\dots,0,-h_\mathcal P).
 \end{align*}
 Thus, we find $n-1$ vectors of $\Lambda_\mathcal P$ which are linearly independent over $\mathbb R$. Hence, $\operatorname{rank}(\Lambda_\mathcal P)\geqslant n-1$. However, $\operatorname{rank}(\Lambda_\mathcal P)\leqslant n-1$ from (a), so our claim follows.
 \item[(e)] From Proposition \ref{propvolquot}, the volume of $\Lambda_\mathcal P$ is given by 
 \[V(\Lambda_\mathcal P)=V(A_{n-1})\cdot|A_{n-1}/\Lambda_\mathcal P|=\sqrt{n}\cdot|A_{n-1}/\Lambda_\mathcal P|.\]
 Now consider the map
 \begin{align*}
 \eta\colon A_{n-1}&\to\operatorname{Cl}^0(\mathcal P)\\
 (x_1,\dots,x_n)&\mapsto[x_1P_1+\cdots+x_nP_n],
 \end{align*}
 which is a surjective group homomorphism such that $\ker\eta=\Lambda_\mathcal P$. Therefore, $A_{n-1}/\Lambda_\mathcal P$ is isomorphic (as a group) to $\operatorname{Cl}^0(\mathcal P)$ and $h_\mathcal P=|A_{n-1}/\Lambda_\mathcal P|$, proving our claim. 
 \end{itemize}
 \end{proof}

 \begin{observacao}

 If $K$ is a finite field, then $\operatorname{Cl}^0(\mathcal P)$ is always finite \cite[Proposition 5.1.3]{stich}. If $K$ is infinite (for example, if $\operatorname{char}K=0$), then $\operatorname{Cl}^0(\mathcal P)$ may be infinite, even for a finite set of places $\mathcal P$ (see \cite[Theorem 3.1]{girard2} and \cite[Theorem 1]{girard}). One can still conduct the study of a lattice generated by a function field over an infinite base field, however (d) and (e) of Proposition \ref{propbasicas} will not hold.
 

 As principal divisors always have degree zero regardless of the base field $K$, $\Lambda_\mathcal P$ is always a sublattice of $A_{n-1}$. Thus, Proposition \ref{propvolquot} implies that $\operatorname{rank}(\Lambda_\mathcal P)<n-1$ whenever $\operatorname{Cl}^0(\mathcal P)$ is infinite.

 \end{observacao}

\section{Fermat function field lattices}\label{secfffl}
	We now construct a family of lattices over the Fermat function field. The following results are heavily reliant on Rohrlich's study \cite{david} of the group of functions generated by the ``points at infinity'' on the Fermat curves of degree $n\geqslant3$, including the fact that $\operatorname{Cl}^0(\mathcal P)$ is finite in this case. The characterization of this group as well as the functions contained in it are the base for our study of the lattice generated by these points over the Fermat function field.
 
 In the following, we always assume $n\geqslant4$, since for $n=1,2$, the Fermat function field is exactly the rational function field, which generates the $A_{n-1}$ lattice (see \cite[Subsection 2.3.1]{ates}). For $n=3$, the function field is elliptic, and the lattices generated in this case have been extensively studied in \cite{maharaj,minsha}.
 
	
	Let $\mathcal F_n$ denote the Fermat curve of degree $n\geqslant4$, which is the non-singular plane algebraic curve given by the projective equation $X^n+Y^n=Z^n$. Denote its function field by 
	\[F_n\coloneqq K(x,y)\text{ where }x^n+y^n=1\]
	with $K=\mathbb C$.
 The gonality of $F_n$ is $\gamma=n-1$ \cite[Theorem 2.1]{hart}.
	
	Following the notation in \cite{david}, $\mathcal P$ will be taken as the set of $3n$ $K$-rational points on the curve for which exactly one of the coordinates is zero, namely:
	\[a_j\coloneqq(0:\zeta^j:1),\ b_j\coloneqq(\zeta^j:0:1),\ c_j\coloneqq(\varepsilon\zeta^j:1:0)\text{ for }j=0,\dots,n-1,\]
 where $\zeta$ is a primitive $n$-th root of unity and $\varepsilon$ is a primitive $n$-th root of $-1$.

	Note that all points of type $a_j$ are contained in the line $X=0$, all points $b_j$ are on the line $Y=0$, and all of the $c_j$ are on $Z=0$. Thus, we refer to them as lying on a triangle where each of the lines corresponds to a side of this triangle. For the sake of simplicity, we set $A\coloneqq a_0+\cdots+a_{n-1},\ B\coloneqq b_0+\cdots+b_{n-1},\ C\coloneqq c_0+\cdots+c_{n-1}$.
	
	Denoting by $\Lambda_n$ the lattice from $F_n|K$ and generated by $\mathcal P$, we first investigate the minimum distance $d(\Lambda_n)$ and prove that it exceeds the expected lower bound of $\sqrt{2\gamma}=\sqrt{2(n-1)}$ for all $n\geqslant4$.
	
		

\subsection{Minimum distance}
 
	In order for a function to attain the minimum distance $\sqrt{2(n-1)}$, it must have a pole divisor of the form $p_1+\cdots+p_{n-1}$, where $p_1,\dots,p_{n-1}\in\mathcal P$ are all distinct. Therefore, we examine the Riemann-Roch space $\mathcal L(p_1+\cdots+p_{n-1})$. We have the following proposition:
	
	\begin{proposicao}\label{proprrn-1}
		Let $p_1,\dots,p_{n-1}\in\mathcal P$ be points such that $p_i\neq p_j$ if $i\neq j$. Then $\ell(p_1+\cdots+p_{n-1})=1$, except if $p_1,\dots,p_{n-1}$ are aligned.
	\end{proposicao}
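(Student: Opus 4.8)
The plan is to read off $\ell(D)$ for $D:=p_1+\cdots+p_{n-1}$ from the Riemann--Roch theorem and thereby reduce the statement to a question in the projective geometry of the plane. Since $\mathcal F_n$ is a smooth plane curve of degree $n$, its genus is $g=\binom{n-1}{2}$ and \emph{every} plane curve of degree $n-3$ is an adjoint; hence the specialty index $i(D)$ is the dimension of the space $V$ of forms of degree $n-3$ vanishing at the distinct points $p_1,\dots,p_{n-1}$. The ambient space of forms of degree $n-3$ has dimension $\binom{n-1}{2}=g$, so $i(D)\geqslant g-(n-1)$, with equality precisely when $p_1,\dots,p_{n-1}$ impose independent conditions on curves of degree $n-3$. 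Feeding $\deg D=n-1$ into Riemann--Roch gives $\ell(D)=n-g+i(D)$, so the proposition becomes the assertion that \emph{$n-1$ distinct points of $\mathcal P$ impose independent conditions on plane curves of degree $n-3$ if and only if they are not aligned}.

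For the exceptional (aligned) case, suppose the $p_i$ lie on a line $\ell_0$. The restriction map $H^0(\mathbb P^2,\mathcal O(n-3))\to H^0(\ell_0,\mathcal O(n-3))\cong\mathbb C^{\,n-2}$ is surjective, and $n-1$ distinct points of $\mathbb P^1$ impose at most $n-2$ conditions on the $(n-2)$-dimensional target; tracking dimensions through the restriction sequence gives $i(D)=\binom{n-2}{2}$, hence $\ell(D)=2$. (For $n\geqslant5$ such a line $\ell_0$ is necessarily one of the three sides of the coordinate triangle, since any other line meets the triangle, and hence $\mathcal P$, in at most three points; but the argument above does not need this.)

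For the main direction I would invoke the standard criterion: a finite set imposes independent conditions on $|\mathcal O(n-3)|$ exactly when, for every point of the set, there is a curve of degree $n-3$ through the remaining points but avoiding the chosen one. I construct such a curve $C_i$ as a union of $n-3$ lines. The single place where non-alignment is used is the combinatorial claim: having fixed $i$, among the $n-2$ points $\{p_j:j\ne i\}$ there exist two, $p_a$ and $p_b$, with $p_i\notin\overline{p_ap_b}$ --- otherwise, fixing $p_a$ and letting $p_b$ range would force every $p_j$ onto the line $\overline{p_ap_i}$, making all $n-1$ points aligned. Granting this, I take $C_i$ to be the union of $\overline{p_ap_b}$ with, for each of the remaining $n-4$ points $p_j$ ($j\ne i,a,b$), any line through $p_j$ missing $p_i$ (possible since $p_j\ne p_i$ and the pencil of lines through $p_j$ is infinite). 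Then $\deg C_i=1+(n-4)=n-3$, $C_i$ passes through all $p_j$ with $j\ne i$, and $C_i$ avoids $p_i$ by construction; for $n=4$ this degenerates to $C_i=\overline{p_ap_b}$. This gives independence of the conditions, i.e.\ $i(D)=g-(n-1)$ and $\ell(D)=1$.

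The only genuine difficulty is the combinatorial claim of the last paragraph; everything else is Riemann--Roch bookkeeping, and the borderline case $n=4$ (where $n-1=g$ and $i(D)$ is just the number of lines through the three points) is checked directly. I would also note at the outset that $D$ is reduced and $\mathcal F_n$ is smooth, so ``passing through $D$'' really means passing through the $n-1$ points, with no hidden multiplicity conditions to worry about.
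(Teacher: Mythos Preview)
Your proof is correct and takes a genuinely different route from the paper's. The paper handles $n=4$ essentially as you do (Riemann--Roch reduces to counting lines through three points), but for $n\geqslant5$ it invokes Lemma~\ref{lemmadivint}: it cuts out $D$ on $\mathcal F_n$ by $V(X)$, $V(XY)$, or $V(XYZ)$ according to whether the $p_i$ lie on one, two, or three sides of the coordinate triangle, and then, case by case, counts lines, conics, or cubics through the residue divisor. You instead reduce everything uniformly to whether the $n-1$ points impose independent conditions on $|\mathcal O(n-3)|$, and settle that with the explicit union-of-lines construction of the separating curves $C_i$.

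What each approach buys: your argument is cleaner, uniform in $n$, and makes no use of the special position of $\mathcal P$ on the three coordinate lines---it would prove the identical statement for \emph{any} $n-1$ distinct points on a smooth plane curve of degree $n$. The paper's residue-divisor method, by contrast, leans on the triangle geometry but introduces exactly the machinery that is re-used later (Proposition~\ref{proprrn5}, Theorems~\ref{teoK46} and~\ref{teo2distmin4}), where the divisors have higher degree and the ``independent conditions'' shortcut is no longer as direct.
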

	
	\begin{proof}
		We split the proof in two cases. First, let $n=4$ and consider the positive divisor $D=p_1+p_2+p_3$, with degree $\deg (D)=3$. Furthermore, the genus of $\mathcal F_4$ is $g(\mathcal F_4)=3$. Hence, by the Riemann-Roch Theorem, we have $\ell(D)=i(D)+1$. The adjoint curves of degree $n-3$ are lines in this case. Thus, if $p_1,p_2$ and $p_3$ lie on one side of the triangle, then they determine a unique line that passes through them, which implies $i(p_1+p_2+p_3)=1$ and $\ell(p_1+p_2+p_3)=2$. If $p_1,p_2$ and $p_3$ lie on more than one side of the triangle, there is no line passing through all three points. Hence $i(p_1+p_2+p_3)=0$ and $\ell(p_1+p_2+p_3)=1$.

		For $n\geqslant5$ we make use of Lemma \ref{lemmadivint}. Without loss of generality, suppose that all $n-1$ points lie on the line $X=0$. Take the line $\mathcal L=V(X)$. Then, Theorem \ref{teobezout} guarantees that $\mathcal L\cdot\mathcal F_n=A$. The residue divisor $R$ always consists of one point, meaning $\ell(p_1+\cdots+p_{n-1})$ is the dimension of the pencil of lines through a point, that is, $\ell(p_1+\cdots+p_{n-1})=2$.
			
			We may now assume that $p_1,\dots,p_{n-1}$ lie on the lines $X=0$ and $Y=0$. Consider the conic $\mathcal G=V(XY)$. It follows that $\mathcal G\cdot\mathcal F_n=A+B$ and the residue divisor $R$ is the sum of $n+1$ points with at least $2$ being on each side, since a side having only $1$ point would imply $D$ has its support contained only in that side. However, if a side has $3$ or more residue points, Theorem \ref{teobezout} implies that side must be a component of any conic passing through $R$. If both sides satisfy this condition, the conic is uniquely determined. 
			
			If one of the lines contains only $2$ points of the residue divisor, then the other contains the remaining $n-1$ points, and thus, is a component of the conic. The first $2$ points define a unique line, and as the conic must also pass through these points, it must be unique. Finally, observe that it is impossible for both sides to contain only $2$ residue points each, since the divisor $D$ would have $2n-4$ points and $2n-4>n-1$ for $n\geqslant5$. Therefore $\ell(p_1+\cdots+p_{n-1})=1$ in all cases.
			
			For $p_1,\dots,p_{n-1}$ lying on all three sides, take the cubic $\mathcal{C}=V(XYZ)$, which cuts out on $\mathcal F_n$ the divisor $\mathcal C\cdot\mathcal F_n=A+B+C$. The residue is the sum of $2n+1$ points
 with at least $3$ on each side, similarly to the previous case. 
			
			If a side contains $4$ or more points of $R$, the Bézout Theorem implies that such a side is a component of any cubic passing through the residue. If all sides satisfy this condition, then the cubic is unique. In the case that only $2$ sides have $4$ or more points of $R$, the cubic is still unique, since the $3$ points on the last side are aligned, and thus define a unique line through them.
			
			Finally, note that two different sides cannot contain only $3$ points of $R$ each. If this were the case, the last side would have to contain $2n-5$ points. Since $2n-5\geqslant n$ for $n\geqslant5$, this is impossible and $\ell(p_1+\cdots+p_{n-1})=1$ in all cases, finishing the proof.\qedhere
	\end{proof}
	
	From Proposition \ref{proprrn-1}, the only possibility for a function to have length $\sqrt{2\gamma}$ is to have a pole divisor with all points aligned. However, we claim that all functions in these Riemann-Roch spaces have either zeros or poles not in $\mathcal P$. Before proving our claim, we list some functions and divisors that will be useful:
 \medskip
	\begin{center}\def\arraystretch{1.1}
		\begin{tabular}{ll}
			Function & Divisor \\
			$x$& $A-C$ \\
			$y$& $B-C$ \\
			$x-\zeta^j$& $n\cdot b_j-C$ \\
			$y-\zeta^j$& $n\cdot a_j-C$ \\
 $\dfrac{y-\zeta^j}{x}$&$(n-1)\cdot a_j-\left(A-a_j\right)$\\
		$\dfrac{x-\zeta^j}{y}$&$(n-1)\cdot b_j-\left(B-b_j\right)$\\
 $x-\varepsilon\zeta^jy$& $(n-1)\cdot c_j-\left(C-c_j\right)$.\\
		\end{tabular}
	\end{center}
 \medskip

	We have thus obtained explicit bases for the non-trivial Riemann-Roch spaces of Proposition \ref{proprrn-1}, since
	\begin{align*}
		\left\{1,\frac{y-\zeta^j}{x}\right\}&\text{ is a base for } \mathcal L(A-a_j)\\
		\left\{1,\frac{x-\zeta^j}{y}\right\}&\text{ is a base for } \mathcal L(B-b_j)\\
		\left\{1,x-\varepsilon\zeta^jy\right\}&\text{ is a base for } \mathcal L(C-c_j).
	\end{align*}
	
	We can now prove our first major result:
	
	\begin{teorema}\label{dismin2n}
		The minimum distance of $\Lambda_n$ exceeds $\sqrt{2\gamma}=\sqrt{2(n-1)}$ for all $n\geqslant4$.
	\end{teorema}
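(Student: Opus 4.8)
The plan is to argue by contradiction. Suppose $z\in\mathcal O_{\mathcal P}^*$ is non-constant with $\|\varphi_{\mathcal P}(z)\|=\sqrt{2(n-1)}$. First I would read off the exact shape of $z$ from the proof of Proposition \ref{propbasicas}(c): the chain $\|\varphi_{\mathcal P}(z)\|^2=\sum b_i^2+\sum c_j^2\geqslant\sum b_i+\sum c_j=2\deg(z)\geqslant 2\gamma=2(n-1)$ must collapse to equalities, forcing $\deg(z)=n-1$ and both $(z)_0$ and $(z)_\infty$ to be sums of $n-1$ \emph{distinct} points of $\mathcal P$ (every valuation is $0$ or $1$). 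Since $1$ and $z$ are linearly independent in $\mathcal L((z)_\infty)$, we have $\ell((z)_\infty)\geqslant 2$, so Proposition \ref{proprrn-1} forces the $n-1$ points of $(z)_\infty$ to be aligned. A line through $n-1\geqslant 3$ distinct points of $\mathcal P$ is necessarily one of the three sides: for $n\geqslant 5$ a non-coordinate line meets each side in at most one point and hence contains at most $3$ points of $\mathcal P$; for $n=4$ one checks directly that no triple $a_i,b_k,c_l$ is collinear (the relevant $3\times 3$ determinant vanishes only if $\varepsilon=-\zeta^m$ for some $m$, impossible since $\varepsilon^n=-1$). Exploiting the coordinate-permuting automorphisms of $\mathcal F_n$, which permute $\mathcal P$ and the three sides, I may assume $(z)_\infty=A-a_j$.

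Then I would make $z$ explicit using the basis displayed before the theorem: $\mathcal L(A-a_j)=\langle 1,\ (y-\zeta^j)/x\rangle$, so $z=\dfrac{\alpha x+\beta(y-\zeta^j)}{x}$ with $\beta\neq 0$. The numerator is the restriction to $\mathcal F_n$ of the line $\mathcal L'\colon\alpha X+\beta Y-\beta\zeta^j Z=0$; combining $(x)=A-C$ with $\bigl(\alpha x+\beta(y-\zeta^j)\bigr)=\mathcal L'\cdot\mathcal F_n-C$ yields $(z)=\mathcal L'\cdot\mathcal F_n-A$, whence $(z)_0=\mathcal L'\cdot\mathcal F_n-a_j$. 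Note that $\mathcal L'$ passes through $a_j$ but, as $\beta\neq 0$, is not a side; therefore it meets each side in a single point, so $\mathcal L'\cap\mathcal P$ has at most $3$ elements and its only possible $a$-point is $a_j$.

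The contradiction now comes from comparing this with the shape of $(z)_0$. Its support lies in $\mathcal L'\cap\mathcal P$, yet it must be a sum of $n-1$ distinct points of $\mathcal P$; hence $n-1\leqslant 3$, which already settles $n\geqslant 5$. The case $n=4$ is the main obstacle, since the dimension count alone leaves it open: here $(z)_0$ has to be a sum of $3$ distinct points of $\mathcal P$ on $\mathcal L'$, which is possible only if $(z)_0=a_j+b_k+c_l$, one point per side with the $a$-point equal to $a_j$. Then $a_j$ appears in $(z)_0$, so it appears in $\mathcal L'\cdot\mathcal F_4$ with multiplicity $\geqslant 2$, i.e.\ $\mathcal L'$ is the tangent to $\mathcal F_4$ at $a_j$. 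But from the gradient of $X^4+Y^4-Z^4$ at $a_j$ this tangent is $\zeta^{3j}Y=Z$, which contains no $b_k$ (else $0=1$) and no $c_l$ (else $\zeta^{3j}=0$) — contradicting $b_k,c_l\in\mathcal L'$. Hence no such $z$ exists; since $d(\Lambda_n)\geqslant\sqrt{2(n-1)}$ always holds, the inequality is strict for every $n\geqslant 4$.
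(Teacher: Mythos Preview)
Your proof is correct and follows essentially the same strategy as the paper: reduce via Proposition~\ref{proprrn-1} to the case where the pole divisor is $A-a_j$ (up to symmetry), write $z$ via the explicit basis $\{1,(y-\zeta^j)/x\}$, and then argue that the numerator line cannot pick up enough points of $\mathcal P$. The only noteworthy difference is your handling of $n=4$: the paper obtains the contradiction by observing that a zero divisor of the form $a_i+b_j+c_k$ for the numerator line $\ell$ would make $1/\ell$ a non-constant element of $\mathcal L(a_i+b_j+c_k)$, which Proposition~\ref{proprrn-1} forbids; you instead force $\mathcal L'$ to be the tangent at $a_j$ and check directly that this tangent misses all $b_k$ and $c_l$. (Your earlier determinant check that no triple $a_i,b_k,c_l$ is collinear already gives the contradiction too, so the tangent computation is a second, redundant route to the same conclusion.)
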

	
	\begin{proof}
		We only need to examine the Riemann-Roch spaces with dimension greater than $1$. Given $f\in\mathcal L(A-a_j)$, its divisor has the form $(f)=(\alpha x+\beta(y-\zeta^j))-(x)$.
		with $\alpha,\beta\in K$. Note that we may assume $\alpha,\beta\neq0$, since if $\beta=0$, $f$ is constant, and if $\alpha=0$, $(f)=\left(\frac{y-\zeta^j}{x}\right)$, which has a zero of order $n-1$ and thus does not have length $\sqrt{2\gamma}$. Notice that $\alpha x+\beta(y-\zeta^j) = 0$ defines a line $\ell$ which is not the tangent at $a_j=(0:\zeta^j:1)$. It cannot coincide with $X=0$ since $\beta\neq0$; neither with $Y=0$ or $Z=0$, given that it passes through $a_j$.
		
		Therefore, $\ell$ can only possibly intersect the triangle at one point on each side. This means that for $n\geqslant5$, $\ell$ must have at least one zero outside of $\mathcal P$. In the case $n=4$, three points of intersection are impossible. Indeed, if $\ell$ intersected all three sides, its zero divisor would have the form $(\ell)_0=a_i+b_j+c_k$ for some $i,j,k\in\{0,1,2,3\}$. But this means $(1/\ell)_\infty=a_i+b_j+c_k$, and thus $1/\ell$ has three poles that are not aligned, contradicting Proposition \ref{proprrn-1}. Given that $x$ has all zeros and poles in $\mathcal P$, this implies that $f$ has at least one zero outside of $\mathcal P$, that is, $f\notin \mathcal O_\mathcal P^*$.
		
		For the remaining cases, we apply the same argument to functions $g\in\mathcal L(B-b_j)$ and $h\in\mathcal L(C-c_j)$ given by
		\[g=\alpha'+\beta'\cdot\frac{x-\zeta^j}{y}\text{ and }h=\alpha''+\beta''(x-\varepsilon\zeta^jy)\]
		with $\alpha',\beta',\alpha'',\beta''\in K^*$. 
 
		
		
		Since the only way for a function to attain length $\sqrt{2\gamma}$ is to have $\gamma$ simple zeros and $\gamma$ simple poles, this proves that the norm $\sqrt{2\gamma}$ is not achieved.
	\end{proof}

 \begin{observacao}
 In contrast with Theorem \ref{dismin2n}, the lattice $\Lambda_3$ does attain the lower bound $\sqrt{2\gamma}=2 $ for the minimum distance. Indeed, by \cite[Theorem 2]{david} the following function is an element of $\mathcal O_\mathcal P^*$
 \[g\coloneqq\sqrt[3]{(x-\zeta)(x-\varepsilon\zeta y)(x-\zeta^2)^2(x-\varepsilon\zeta^2y)^2}.\]
 Computing the principal divisor of $\frac{g}{y}$, we obtain
 \[\left(\frac{g}{y}\right)=b_1+2b_2-2c_0-c_1-(B-C)=b_2+c_2-(b_0+c_0).\]
 Thus, $\left|\left|\frac{g}{y}\right|\right|= 2 =\sqrt{2\gamma}$.
 \end{observacao}
 
	\begin{corolario}\label{cordismin}
		The minimum distance of the lattice $\Lambda_n$ is $d(\Lambda_n)=\sqrt{2n}$ for all $n\geqslant4$.
	\end{corolario}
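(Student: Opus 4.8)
The plan is to deduce the exact value of $d(\Lambda_n)$ by squeezing it between a lower bound coming from Theorem \ref{dismin2n} together with the parity of the lattice, and an upper bound coming from an explicit function. First I would record that Theorem \ref{dismin2n} gives the \emph{strict} inequality $d(\Lambda_n) > \sqrt{2(n-1)}$, i.e. $d(\Lambda_n)^2 > 2n-2$. By Proposition \ref{propbasicas}(b), $\Lambda_n$ is an even lattice, so $d(\Lambda_n)^2$ is an even integer; since no even integer lies strictly between $2n-2$ and $2n$, this forces $d(\Lambda_n)^2 \geqslant 2n$, that is, $d(\Lambda_n) \geqslant \sqrt{2n}$.

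For the matching upper bound I would exhibit a function in $\mathcal O_\mathcal P^*$ of norm exactly $\sqrt{2n}$. The natural candidate is $x \in F_n$: from the table of functions and divisors above, $(x) = A - C = \sum_{j=0}^{n-1} a_j - \sum_{j=0}^{n-1} c_j$, so $\operatorname{supp}(x) \subseteq \mathcal P$ and hence $x \in \mathcal O_\mathcal P^*$. The image $\varphi_\mathcal P(x)$ then has $n$ coordinates equal to $1$, $n$ coordinates equal to $-1$, and $n$ coordinates equal to $0$, so $\|\varphi_\mathcal P(x)\|^2 = 2n$. (This is consistent with Proposition \ref{propbasicas}(c), since $\deg(x) = n$ and all zeros and poles of $x$ are simple, so equality holds in $\|x\| \geqslant \sqrt{2\deg(x)}$.) Therefore $d(\Lambda_n) \leqslant \sqrt{2n}$, and combining the two inequalities gives $d(\Lambda_n) = \sqrt{2n}$.

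I do not expect any real obstacle here: the corollary is essentially a bookkeeping consequence of the theorem just proved, the evenness of $\Lambda_n$, and the explicitly known divisor of $x$. The only point worth stating carefully is the parity step, namely that an even integer exceeding $2n-2$ must be at least $2n$; everything else is immediate. One could equally well use $y$ in place of $x$, since $(y) = B - C$ has the same shape.
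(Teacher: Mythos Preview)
Your proof is correct and follows essentially the same approach as the paper: the paper likewise combines Theorem \ref{dismin2n} with the evenness of $\Lambda_n$ to obtain $d(\Lambda_n)\geqslant\sqrt{2n}$, and then exhibits explicit functions (including $x$ and $y$, along with $1/x,1/y,x/y,y/x$) whose divisors $A-C$, $B-C$, etc., realize the norm $\sqrt{2n}$. Your exposition of the parity step is slightly more detailed, but the argument is the same.
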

	
	\begin{proof}
		Since $\Lambda_n$ is an even lattice, Theorem \ref{dismin2n} implies that the second shortest possible length is $\sqrt{2n}$. This is achieved, for example, by the following functions:
		\begin{center}
			\begin{tabular}{ll}
				Function & Divisor \\
				$x$& $A-C$ \\
				$y$& $B-C$ \\
				$1/x$& $C-A$ \\
				$1/y$& $C-B$ \\
				$x/y$& $A-B$ \\
				$y/x$& $B-A$ 
			\end{tabular}
		\end{center}
	\end{proof}
	
	\begin{observacao}\label{remdismin}
		Note that for a function to have norm $\sqrt{2n}$, it can either have degree $n$ with a divisor consisting of $2n$ points of order $1$ (as the examples of Corollary \ref{cordismin}), or it can have degree $n-1$ and a divisor with $2n-4$ points of order $1$ and $1$ point of order $2$, as in this case the norm would be $\sqrt{(2n-4)\cdot(\pm1)^2+(\pm2)^2}=\sqrt{2n}$.
		
		However, Proposition \ref{proprrn-1} implies that the latter case is impossible. Indeed, such a function $f$ would need to have $n-1$ simple zeros, $n-3$ simple poles and $1$ double pole or vice versa. In this case, either $f$ of $1/f$ would be an element of one of the Riemann-Roch spaces in Proposition \ref{proprrn-1}, a contradiction.
	\end{observacao}

\subsection{Kissing number and well-roundedness}

Next, we proceed to study the kissing number of $\Lambda_n$. The proof of Corollary \ref{cordismin} gives us the lower bound $K(\Lambda_n)\geqslant6$; we claim that this bound is indeed sharp for all $n\geqslant4$.

 From Remark \ref{remdismin}, to prove this, we need only to prove that no other functions whose divisor has $2n$ simple points exist in $\mathcal O_\mathcal P^*$. Once again, we prove this separately for $n\geqslant5$ and $n=4$. For $n\geqslant5$, we have an analogous version of Proposition \ref{proprrn-1}:

\begin{proposicao}\label{proprrn5}
	Let $n\geqslant5$ and $p_1,\dots,p_{n}\in\mathcal P$ be such that $p_i\neq p_j$ if $i\neq j$. Then $\ell(p_1+\cdots+p_n)=1$ unless at least $n-1$ points are aligned.
\end{proposicao}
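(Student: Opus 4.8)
The plan is to follow the template of Proposition~\ref{proprrn-1} but with one extra point, treating separately the number of sides of the triangle on which $p_1,\dots,p_n$ lie. Since $\deg(p_1+\cdots+p_n)=n$ and the genus of $\mathcal F_n$ is $g=\binom{n-1}{2}$, which is at least $n$ for $n\geqslant 5$, the Riemann--Roch theorem gives $\ell(D)=1+i(D)$, so I must show $i(D)=0$ except when $n-1$ of the points are collinear. Equivalently, via Lemma~\ref{lemmadivint}, I would cut out the divisor with a product of the coordinate lines and count the dimension of the linear system of curves of the appropriate degree through the residue divisor, which I expect to be $0$-dimensional (a single curve) in all cases except the aligned one.

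First I would handle the case where all $n$ points lie on a single side, say $X=0$: cutting with $\mathcal L=V(X)$ gives $\mathcal L\cdot\mathcal F_n=A$, and since $A$ has degree $n$ while $D$ has degree $n$, actually $D=A$ is impossible because $D$ has only $n$ of the $n$ points of $A$ — wait, $A$ has exactly $n$ points, so $D=A$; then the residue is $0$ and $\ell(D)$ is the full pencil of lines, giving $\ell(D)=2$; this is precisely the ``$n-1$ (in fact $n$) points aligned'' exceptional case. Next, for two sides, say all points on $X=0$ and $Y=0$: cut with $\mathcal G=V(XY)$ of degree $2$, so $\mathcal G\cdot\mathcal F_n=A+B$ (degree $2n$) and the residue $R$ has degree $2n-n=n$, distributed as $n$ points with the two sides; if one side carries at most one residue point then the support of $D$ misses at most one point of that side, forcing at least $n-1$ of the $p_i$ on that side — the exceptional case. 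Otherwise each side carries at least two residue points; as in Proposition~\ref{proprrn-1}, a side with $\geqslant 3$ residue points must be a component of every conic through $R$ by Bézout, and a side with exactly $2$ residue points pins down the conic through those two points; since $n\geqslant 5$ rules out both sides having only $2$ residue points (that would need $2n-4$ points in $D$, and $2n-4>n$), the conic through $R$ is unique, so $\ell(D)=1$. The three-sides case is analogous: cut with the cubic $\mathcal C=V(XYZ)$, residue $R$ of degree $3n-n=2n$ distributed with at least... and here I must be careful: a side could carry as few as $n-(\text{points of }D\text{ on the other two sides})$ residue points, so I would argue that if some side carries $\leqslant 2$ residue points then $D$ has $\geqslant n-2$ of its points on that side; combined with at least one point on each of the other two sides this gives $\geqslant n$, forcing $n-2$ on one side — but the exceptional bound is $n-1$, so I need to sharpen: if a side has $\leqslant 2$ residue points and $D$ is not of the aligned type, derive a contradiction by checking the cubic is still forced (the $n-2\geqslant 3$ aligned points on that side determine the line, the other two sides having $\geqslant 4$ residue points each become forced components, etc.) and ruling out multiple sides being ``light'' by the degree count $2n-5\geqslant n$ for $n\geqslant 5$.

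The main obstacle I anticipate is the bookkeeping in the three-sides case: unlike the $n-1$-point situation of Proposition~\ref{proprrn-1}, here the residue divisor on a side can be as small as $3$ only when $D$ concentrates heavily on the other two sides, and I must precisely delineate when this still forces a unique cubic versus when it genuinely produces the extra exceptional configuration (exactly $n-1$ points aligned, with the single remaining point anywhere). I would resolve this by a careful inequality analysis: letting $s_X,s_Y,s_Z$ be the number of $p_i$ on each side (with overlaps at the three triangle vertices handled separately, noting the vertices $a_j,b_j,c_j$ as defined are the points with exactly one zero coordinate, so no two sides actually share a listed point of $\mathcal P$), we have $s_X+s_Y+s_Z=n$, the residue multiplicities on the three sides are $n-s_X, n-s_Y, n-s_Z$ summing to $2n$, and I show that unless some $s_\bullet\geqslant n-1$, each residue count is large enough (and the "light" sides few enough) that the cubic through $R$ is rigid. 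Once $\ell(D)=1$ is established in every non-aligned case, the proposition follows, and it feeds directly into the kissing-number bound $K(\Lambda_n)=6$ via Remark~\ref{remdismin}.
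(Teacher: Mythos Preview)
Your approach is essentially the paper's: cut $D$ out with $V(X)$, $V(XY)$, or $V(XYZ)$ according to how many sides of the triangle meet $\operatorname{supp}(D)$, and use B\'ezout to force the curve through the residue to be unique except in the aligned cases. The two- and three-sides bookkeeping you outline is exactly what the paper does (their table of residue distributions and the $k\geqslant4$ ``worst case'' inequality analysis corresponds to your $s_X+s_Y+s_Z=n$ formulation).

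Two slips to fix. First, the Riemann--Roch identity you write, $\ell(D)=1+i(D)$, is wrong here: for $\deg D=n$ and $g=\binom{n-1}{2}$ one has $\ell(D)=n+1-g+i(D)$, which for $n\geqslant5$ gives $n+1-g\leqslant0$, so nothing useful comes from showing $i(D)=0$ directly. This is precisely why the paper (and you, once you pivot) works entirely through Lemma~\ref{lemmadivint}; drop the Riemann--Roch sentence. Second, in the all-aligned case $D=A$, the residue is empty and the linear system is \emph{all} lines in $\mathbb P^2$, giving $\ell(D)=3$, not $2$; ``full pencil'' is a misnomer. Neither slip threatens the proposition's conclusion, but both should be corrected. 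Also, in your three-sides paragraph the phrase ``the $n-2\geqslant3$ aligned points on that side determine the line'' momentarily confuses points of $D$ with points of $R$; what actually forces the cubic is that the \emph{other} two sides carry $n-1\geqslant4$ residue points each, exactly as you say in the parenthetical that follows.
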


	

\begin{proof}[Proof:]
		First assume, without loss of generality, that the points lie on $\mathcal L=V(X)$. Then $\mathcal L\cdot\mathcal F_n=A$ and $R=0$. This means there are no restrictions imposed on the lines of the linear system. That is, $\ell(p_1+\cdots+p_n)$ is the dimension of the space of all lines in the projective plane, hence $\ell(p_1+\cdots+p_n)=3$.
		
		Now, we may take the points over $X=0$ and $Y=0$. The conic $V(XY)$ cuts out the divisor $A+B$ on $\mathcal F_n$. The support of $R$ has $n$ points with at least $1$ of them lying on each side. Thus, the possible distributions for the points of the residue divisor are:
		\begin{center}
			\begin{tabular}{c|c}
				Side $1$ & Side $2$ \\
				$n-1$ & $1$ \\
				$n-2$ & $2$ \\
				$n-3$ & $3$ \\ 
				$\vdots$ & $\vdots$ \\
				$1$ & $n-1$
			\end{tabular}
		\end{center}
		In the first and last lines of the table, since $n-1\geqslant4$, one side is always a component of any conic passing through the residue. The other component may be any line that passes through the point lying on the other side, that is, $\ell(p_1+\cdots+p_n)=2$.
		
		For the second line, $n-2\geqslant3$, thus side $1$ is a component. The $2$ points remaining on side $2$ define a unique line, which means $\ell(p_1+\cdots+p_n)=1$. For all other lines of the table, side $2$ is always a component, and side $1$ always has $2$ or more points, meaning the conic is always unique and $\ell(p_1+\cdots+p_n)=1$.
		
		Finally, take the cubic $\mathcal C=V(XYZ)$ which cuts out the divisor $A+B+C$ on $\mathcal F_n$, meaning $R$ is the sum of $2n$ distinct points with at least $2$ and at most $n-1$ on each side, since $n$ such points on a side means that side has no points of $D$. The first two possible distributions of points in $\operatorname{supp}(R)$ are
		
		\begin{center}
			\begin{tabular}{c|c|c}
				Side $1$ & Side $2$ & Side $3$\\
				$2$ & $n-1$ & $n-1$\\
				$3$ & $n-1$ & $n-2$
			\end{tabular}
		\end{center}
		
		If one side has only $2$ points of $\operatorname{supp}(R)$, the only way to distribute the other $2n-2$ is to have $n-1$ on each remaining side. The fact that $n-1\geqslant4$ then implies these two sides must be components of the cubic. The last component is the line determined by the two points on the first side and the cubic is unique.
		
		Now, if one side has $3$ residue points, one of the other sides must have $n-1$ points, and the other, $n-2$. The side with $n-1$ points is a component, since $n-1\geqslant4$. Having already determined one line to be a component, we only need to find a conic through the remaining points. This means that a line now only needs to have $3$ points of intersection in order to be a component. With two determined components, the last line only needs to have $2$ points of intersection to qualify as a component of the cubic. This observation implies the uniqueness of the cubic in this case.
		
		Finally, suppose there are $k\geqslant4$ points of $R$ on the first side, implying it is a component. In this case, we must have $n\geqslant k+1$. To conclude uniqueness, we will show that at least one of the other sides is always a component. The distribution of points on the triangle has the form $k+(n-u)+(n-v)$, with $u,v\in\mathbb Z_{\geqslant0}$ such that $u+v=k$. 
 
 Consider the special cases
		\begin{align*}
			&k+\left(n-\frac{k}{2}\right)+\left(n-\frac{k}{2}\right),\text{ if $k$ is even.}\\
			&k+\left(n-\frac{k-1}{2}\right)+\left(n-\frac{k+1}{2}\right),\text{ if $k$ is odd.} 
		\end{align*}
		
		Note that if we can prove uniqueness for the case of these distributions, all of the other cases will follow, since other distributions will necessarily have more points of $\operatorname{supp}(R)$ on a single side. Hence, if that side was a component in these special cases, it will also be a component for other distributions of points.
		
		If $k$ is odd, we only need to show that $n-\frac{k-1}{2}\geqslant3$. We have
		\[n-\frac{k-1}{2}\geqslant k+1-\frac{k-1}{2}=\frac{k+3}{2}\geqslant4>3\]
		for $k\geqslant5$. And if $k$ is even, we show that $n-\frac{k}{2}\geqslant3$. We have
		\[n-\frac{k}{2}\geqslant k+1-\frac{k}{2}=\frac{k+2}{2}\geqslant3.\]
		for $k\geqslant4$. Hence, uniqueness is guaranteed and $\ell(p_1+\cdots+p_n)=1$.\qedhere
\end{proof}

\begin{teorema}\label{kisnumn5}
	The kissing number of $\Lambda_n$ is $K(\Lambda_n)=6$ for all $n\geqslant5$.
\end{teorema}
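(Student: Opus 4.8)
The plan is to prove that the six functions recorded in the proof of Corollary~\ref{cordismin} realise \emph{all} vectors of $\Lambda_n$ of norm $\sqrt{2n}$, so that the lower bound $K(\Lambda_n)\geqslant 6$ observed there is sharp. Let $f\in\mathcal O_\mathcal P^*$ satisfy $\|\varphi_\mathcal P(f)\|=\sqrt{2n}$. By Remark~\ref{remdismin}, $f$ is non-constant and $(f)$ is a sum of $2n$ \emph{pairwise distinct} points of $\mathcal P$; write $(f)_\infty=p_1+\cdots+p_n$ with distinct $p_i\in\mathcal P$. The first step is to locate the poles. Since $f\in\mathcal L(p_1+\cdots+p_n)$ is non-constant, $\ell(p_1+\cdots+p_n)\geqslant 2$, so Proposition~\ref{proprrn5} forces at least $n-1$ of the $p_i$ to be collinear. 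A line other than the three sides of the triangle meets each side in exactly one point, hence contains at most three points of $\mathcal P$; as $n-1\geqslant 4$, those $n-1$ collinear poles lie on one side of the triangle. Using the symmetry of $\mathcal F_n$ permuting its three sides (an automorphism of $\mathcal F_n$ inducing a coordinate permutation preserving $\Lambda_n$), I may assume this side is $X=0$, i.e.\ that at least $n-1$ of the $p_i$ belong to $\{a_0,\dots,a_{n-1}\}$.

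Two cases remain. \emph{All $n$ poles lie on $X=0$:} then $(f)_\infty=A$ and $f\in\mathcal L(A)$. Applying Lemma~\ref{lemmadivint} with $\mathcal G=V(X)$ gives $\mathcal G\cdot\mathcal F_n=A$ with empty residue divisor, hence $\ell(A)=3$; an element of $\mathcal L(A)$ thus has the form $f=L/X$ for a linear form $L=\alpha X+\beta Y+\gamma Z$ (so that $f=\alpha+\beta\,(y/x)+\gamma\,(1/x)$ and $\mathcal L(A)=\langle 1,\,y/x,\,1/x\rangle$), with zero divisor the intersection divisor of the line $L=0$ with $\mathcal F_n$. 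For $f$ to lie in $\mathcal O_\mathcal P^*$ with $2n$ simple points, the line $L=0$ must meet $\mathcal F_n$ in $n\geqslant 5$ distinct points of $\mathcal P$ and pass through no $a_i$; by the counting above the only admissible choices are $Y=0$ and $Z=0$, which give $f\in\{y/x,\,1/x\}$ up to a constant factor, i.e.\ the lattice vectors attached to $B-A$ and $C-A$.

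\emph{Exactly $n-1$ poles lie on $X=0$:} then $(f)_\infty=(A-a_j)+q$ for some $j$ and some $q$ lying on one of the two remaining sides, call it $\mathcal L_q$. Here I would apply Lemma~\ref{lemmadivint} to the conic $\mathcal G=V(X)\cup\mathcal L_q$: its residue divisor is $a_j$ together with the $n-1$ points of $\mathcal P$ on $\mathcal L_q$ other than $q$. Since $n-1\geqslant 4$, every conic through this residue must contain $\mathcal L_q$ as a component, the complementary component being an arbitrary line through $a_j$; hence $\ell\bigl((A-a_j)+q\bigr)=2$ and $\mathcal L\bigl((A-a_j)+q\bigr)=\langle 1,\,(y-\zeta^j)/x\rangle=\mathcal L(A-a_j)$. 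But $(y-\zeta^j)/x$ has pole divisor $A-a_j$, so no function in this space has a pole at $q$, contradicting $(f)_\infty=(A-a_j)+q$; thus this case is vacuous. Running the analogous analysis with $X=0$ replaced by $Y=0$ and by $Z=0$ then produces exactly the six vectors $\pm(A-B)$, $\pm(B-C)$, $\pm(C-A)$ of Corollary~\ref{cordismin} and nothing more, so $K(\Lambda_n)=6$.

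I expect the main obstacle to be the dimension equality $\ell\bigl((A-a_j)+q\bigr)=2$: the point is that adjoining the extra point $q$ to the divisor $A-a_j$ creates no genuinely new element of the Riemann--Roch space, and this is precisely what the residue analysis secures, the $n-1\geqslant 4$ collinear residue points forcing a fixed line component and collapsing the pencil of conics onto the (image of the) pencil of lines through $a_j$. A lesser, purely bookkeeping point is to check that the six vectors obtained across the three choices of side are pairwise distinct and coincide with those of Corollary~\ref{cordismin}, which is immediate since $A$, $B$, $C$ have pairwise disjoint supports.
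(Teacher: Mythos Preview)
Your proof is correct and follows essentially the same approach as the paper: both invoke Proposition~\ref{proprrn5} to force at least $n-1$ collinear poles, then analyze $\mathcal L(A)$ (respectively $\mathcal L(C)$ in the paper) and $\mathcal L\bigl((A-a_j)+q\bigr)$ via explicit bases to rule out all but the six known functions. Your treatment is slightly more self-contained---the paper handles the first case by deferring to the line argument of Theorem~\ref{dismin2n}, and in the second case cites the length $\sqrt{(n-1)^2+(n-1)}$ rather than your observation that $\mathcal L\bigl((A-a_j)+q\bigr)=\mathcal L(A-a_j)$ forces the case to be vacuous---but the substance is identical.
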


\begin{proof}
 Let $f$ be a function such that $||f|| = d(\Lambda_n)$. By Proposition \ref{proprrn5}, $f$ belongs to one of the following spaces: $\mathcal{L}(A),\mathcal L(B), \mathcal L(C)$, or to some $\mathcal L(D)$, where $D$ is of degree $n$ and whose support is comprised of $n-1$ aligned points and a point on another side of the triangle. In the former case, by symmetry, we may assume $f \in\mathcal L(C)$. Since a basis for $\mathcal L(C)$ is given by $\left\{1,x,y\right\}$,
	a function $f\in\mathcal L(C)$ has the form $f=\alpha+\beta x+\delta y$ with $\alpha,\beta,\delta\in K$. Note that at least two of these scalars must be non-zero, since if $\beta,\delta=0$, $f$ is constant, if $\alpha,\delta=0$, $(f)=(x)$ and if $\alpha,\beta=0$, $(f)=(y)$. 
 The same argument as in the proof of Theorem \ref{dismin2n} then applies, and $f$ has at least one zero outside of $\mathcal P$. For functions in $\mathcal L(A)$ or $\mathcal L(B)$, the proof is analogous. Hence, there can be no minimum length functions in $\mathcal L(A),\mathcal L(B)$ or $\mathcal L(C)$, except for the non-constant basis elements.
	
	In the latter case, let $D = p_1+\ldots +p_n$, with $p_1,\dots,p_{n-1}$ lying on one side and $p_n$ on another. Suppose we take $p_1+\dots+p_{n-1}=A-a_j$ for some $j=0,\dots,n-1$ and $p_n\notin\operatorname{supp}(A)$, then $\frac{y-\zeta^j}{x}\in\mathcal L(p_1+\dots+p_{n})$. In particular, $\left\{1,\frac{y-\zeta^j}{x}\right\}$ is a base for $\mathcal L(p_1+\dots+p_{n})$. Note that we can consider $n-1$ aligned points over $Y=0$ or $Z=0$ and construct the same bases as we did for Theorem \ref{dismin2n}. We have already shown that functions generated by those bases either are not in $\mathcal O_\mathcal P^*$ or have length $\sqrt{(n-1)^2+(n-1)}>\sqrt{2n}$.
\end{proof}

Now, let $n=4$. Consider $4$ distinct points $p_1,p_2,p_3,p_4\in\mathcal P$ and the divisor $D=p_1+p_2+p_3+p_4$, another application of the Riemann-Roch Theorem shows that $\ell(D)=3$ if all points are aligned and $\ell(D)=2$ otherwise.


The spaces of dimension $3$ have bases $\left\{1,\frac1x,\frac{y}{x}\right\},\left\{1,\frac1y,\frac{x}{y}\right\}$ and $\{1,x,y\}$, whose non-trivial linear combinations are functions not in $\mathcal O_\mathcal P^*$. For the other cases, if $3$ points are aligned and $1$ is not aligned, we have bases $\left\{1,\frac{y-\zeta^j}{x}\right\},\left\{1,\frac{x-\zeta^j}{y}\right\}$ and $\{1,x-\varepsilon\zeta^jy\}$ which also produces functions with zeros or poles not in $\mathcal P$.

Therefore, the only cases left to investigate are spaces with $2$ points of $D$ on each side, and spaces where the distribution of points of $D$ is $2,1,1$.
	
\begin{teorema}\label{teoK46}
	The kissing number of $\Lambda_4$ is $K(\Lambda_4)=6$.
\end{teorema}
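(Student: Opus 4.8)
The plan is to prove the matching upper bound $K(\Lambda_4)\leqslant 6$. By Remark~\ref{remdismin}, every vector of $\Lambda_4$ of norm $\sqrt{2n}=\sqrt 8$ equals $\varphi_{\mathcal P}(f)$ for some $f\in\mathcal O_{\mathcal P}^*$ of degree $4$ whose zero and pole divisors each consist of four distinct points of $\mathcal P$; we must show that, up to constants, the only such $f$ are the six functions of Corollary~\ref{cordismin}. By the discussion immediately preceding the theorem, if $D=(f)_\infty$ is aligned (type $4,0,0$) or of type $3,1,0$, then $f$ is either one of those six or has a zero or pole outside $\mathcal P$; applying the same analysis to $1/f$ disposes of the cases where $(f)_0$ is of type $4,0,0$ or $3,1,0$. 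Thus it remains to treat pole (and zero) divisors of type $2,2,0$ and $2,1,1$. The single arithmetic fact I would use is that, for $n=4$, no line of $\mathbb P^2$ meets $\mathcal P$ in three points lying on three different sides of the triangle: for instance the line through $a_p=(0:\zeta^p:1)$ and $b_s=(\zeta^s:0:1)$ meets $Z=0$ at $(-\zeta^s:\zeta^p:0)$, and $(-\zeta^s)^n+(\zeta^p)^n=(-1)^n+1=2\neq 0$ for even $n$; the other two side-pairs are handled identically, using that for even $n$ neither $1$ nor $-1$ is an $n$-th root of $-1$. Consequently every line joining two points of $\mathcal P$ on distinct sides meets $\mathcal P$ in exactly those two points, so its remaining intersections with $\mathcal F_n$ lie off $\mathcal P$. (This is precisely the feature separating $n=4$ from the odd case $n=3$.)

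For $D=(f)_\infty$ of type $2,2,0$, say $D=a_p+a_q+b_s+b_t$, the conic $\mathcal G=V(XY)$ satisfies $\mathcal G\cdot\mathcal F_n=A+B=D+R$, where $R$ is the sum of the two remaining $a$-points and two remaining $b$-points. By Lemma~\ref{lemmadivint}, $\ell(D)=2$ and every non-constant $g\in\mathcal L(D)$ has the form $\mathcal H/\mathcal G$ for a conic $\mathcal H$ through $R$, with zero divisor equal to the residual intersection $\mathcal H\cdot\mathcal F_n-R$. The reducible conics through $R$ other than $\mathcal G$ are products of two cross-side lines, which by the fact above have residual zeros off $\mathcal P$; an irreducible $\mathcal H$ meets $X=0$ and $Y=0$ only at the four points of $R$ and meets $Z=0$ in at most two points, so at least two of its four residual intersections lie off the coordinate triangle, hence off $\mathcal P$. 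Therefore $\mathcal L(D)$ contains no non-constant element of $\mathcal O_{\mathcal P}^*$; via $1/f$ this also rules out $(f)_0$ of type $2,2,0$, so from now on both $(f)_0$ and $(f)_\infty$ are of type $2,1,1$.

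To finish, write $(f)_\infty=a_p+a_q+b_k+c_l$ (after applying an automorphism) and set $\ell=\overline{b_kc_l}$; then $\mathcal G=V(X\ell)$ gives $\mathcal G\cdot\mathcal F_n=A+(\ell\cdot\mathcal F_n)=D+R$ with $R=a_{p'}+a_{q'}+q_1+q_2$, where $q_1+q_2=\ell\cdot\mathcal F_n-b_k-c_l$ lies off the triangle (again by the arithmetic fact). Now $f=\mathcal H/\mathcal G$ for a conic $\mathcal H$ through $R$, and the crucial observation is that the only conic through $R$ having $X=0$ as a component is $\mathcal G$ itself: its complementary line would have to pass through $q_1$ and $q_2$, hence equal $\ell$. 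So for non-constant $f$ the conic $\mathcal H$ meets $X=0$ only at $a_{p'},a_{q'}$. When $\mathcal H$ is moreover transversal to $\mathcal F_n$ at $a_{p'}$ and $a_{q'}$, this forces $(f)_0$ to be supported off $X=0$, hence — since $f\in\mathcal O_{\mathcal P}^*$ — to consist of $b$- and $c$-points only, so $(f)_0$ has type $(j,4-j,0)$, that is, of type $4,0,0$, $3,1,0$, or $2,2,0$; applying the results already obtained to $1/f\in\mathcal L((f)_0)$ then yields a contradiction. What remains is the case in which $\mathcal H$ is tangent to $\mathcal F_n$ at $a_{p'}$ or $a_{q'}$ (tangency at $q_1$ or $q_2$ is automatically impossible, as it would put a zero of $f$ outside $\mathcal P$): there is at most one such conic through $R$ tangent at each of these two points, and I would rule it out by a direct computation with the fourth roots of unity — or by using the automorphism group of $\mathcal F_4$ to reduce to finitely many configurations — showing that its residual intersection with $\mathcal F_4$ is never contained in $\mathcal P$. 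Combining all cases, the only minimal vectors of $\Lambda_4$ are the six of Corollary~\ref{cordismin}, so $K(\Lambda_4)=6$.

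I expect the main obstacle to be exactly this last ``tangent conic'' subcase of type $2,1,1$: for conics transversal to $\mathcal F_4$ along $X=0$ the clean reduction via $1/f$ to the already-treated aligned, $3,1,0$, and $2,2,0$ cases works smoothly, but the (at most two) conics tangent to $\mathcal F_4$ at one of the two free $a$-points of the residue $R$ escape that reduction and appear to need a hands-on verification. Everything else is bookkeeping around the single fact that, for $n=4$, no line passes through three points of $\mathcal P$ lying on three different sides of the triangle.
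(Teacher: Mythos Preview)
Your overall strategy is the paper's: cut out $D$ by $XY=0$ (type $2,2,0$) or by $X\ell=0$ (type $2,1,1$), and show that no non-constant member of the resulting pencil of conics through the residue $R$ gives a function in $\mathcal O_{\mathcal P}^*$. The paper does not attempt your $1/f$ reduction; it simply writes $f=\alpha+\beta g$ with $g$ the explicit second basis element of $\mathcal L(D)$ and asserts that the numerator conic meets $\mathcal F_4$ at a point outside $\mathcal P$ not cancelled by the denominator.

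Your gap is genuine, and it is broader than you flag. In the $2,2,0$ case your claim that an irreducible $\mathcal H$ has ``at least two of its four residual intersections off the coordinate triangle'' tacitly assumes $\mathcal H$ is \emph{transversal} to $\mathcal F_4$ at every point of $R$: the six points you count are the set-theoretic intersections of $\mathcal H$ with the triangle, not their contribution to $\mathcal H\cdot\mathcal F_4$. If $\mathcal H$ is tangent to $\mathcal F_4$ at two or more of $a_{i_3},a_{i_4},b_{j_3},b_{j_4}$, those points reappear in the residual $(f)_0$, and one can have $(f)_0$ supported in $\{a_{i_3},a_{i_4},b_{j_3},b_{j_4}\}\cup\{c_k,c_l\}$ --- of type $2,2,0$ or $2,1,1$, precisely the cases where your $1/f$ reduction becomes circular. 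Thus the same tangency obstruction you isolate in the $2,1,1$ case already lurks in $2,2,0$; in both situations the finitely many conics in the pencil tangent to $\mathcal F_4$ along $R$ escape your counting/reduction argument and must be disposed of directly before the proof is complete.
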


\begin{proof}
	Without loss of generality, suppose the points of $\operatorname{supp}(D)$ are on $X=0$ and $Y=0$, that is $D=a_{i_1}+a_{i_2}+b_{j_1}+b_{j_2}$. We cut out $D$ with the conic $XY=0$, leaving residue $R=a_{i_3}+a_{i_4}+b_{j_3}+b_{j_4}$. Let $r_1$ be the line joining $a_{i_3}$ and $b_{j_3}$ and $r_2$ the line joining $a_{i_4}$ and $b_{j_4}$. The conic $r_1r_2=0$ passes through $R$, thus $\left\{1,\frac{r_1r_2}{xy}\right\}$ is a base for $\mathcal L(D)$.
	
	We see that $r_1$ and $r_2$ have zeros outside of $\mathcal P$, since they do not coincide with any side of the triangle and intersect the sides $X=0$ and $Y=0$ only in $R$. If they intersected $Z=0$, then $\frac{1}{r_1}$ and $\frac{1}{r_2}$ would have pole divisors with $3$ points in its support that are not aligned, which contradicts Proposition \ref{proprrn-1}.
	
	Now let $f=\alpha+\beta\frac{r_1r_2}{xy}$ with $\alpha,\beta\in K$. Once again, $\alpha,\beta\neq0$, since $f$ is constant if $\beta=0$ and if $\alpha=0$, $(f)=(r_1r_2)-(xy)$. Since $r_1r_2\notin\mathcal O_\mathcal P^*$, then $f\notin\mathcal O_\mathcal P^*$. So if $(f)=(\alpha xy+\beta r_1r_2)-(xy)$, then the conic $\alpha xy+\beta r_1r_2=0$ has zeros not in $\mathcal P$, therefore $f\notin\mathcal O_\mathcal P^*$.
	
	Suppose now $D=a_{i_1}+a_{i_2}+b_{j_1}+c_{k_1}$. If $r$ is the line joining $b_{j_1}$ and $c_{k_1}$, then the conic $xr=0$ cuts out $D$ with residue $R=a_{i_3}+a_{i_4}+q_1+q_2$ where $q_1,q_2\notin\mathcal P$. Let $s_1$ be the line joining $a_{i_3}$ and $q_1$ and $s_2$ the line joining $a_{i_4}$ and $q_2$. The conic $s_1s_2=0$ passes through $R$ and $\left\{1,\frac{s_1s_2}{xr}\right\}$ is a base for $\mathcal L(D)$. Taking $f=\alpha+\beta\frac{s_1s_2}{xr}\in\mathcal L(D)$ we can assume $\beta\neq0$, thus $(f)=(\alpha xr+\beta s_1s_2)-(xr)$. The conic $\alpha xr+\beta s_1s_2=0$ has zeros not in $\mathcal P$, and subtracting $(xr)$ from the divisor does not cancel them, meaning $f\notin\mathcal O_\mathcal P^*$.
	
	Since we have eliminated every other possibility for a function with norm $\sqrt{2n}$, it follows that $K(\Lambda_4)=6$.
\end{proof}
	
As an immediate corollary to Theorems \ref{kisnumn5} and \ref{teoK46}, we obtain the following

\begin{corolario}
	$\Lambda_n$ is not well-rounded for $n\geqslant4$.
\end{corolario}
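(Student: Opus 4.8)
The plan is to read off everything from the preceding kissing-number theorems. By Theorems~\ref{kisnumn5} and~\ref{teoK46} we have $K(\Lambda_n)=6$ for every $n\geqslant4$, so the set of minimal-length vectors of $\Lambda_n$ is finite and, by the proof of Corollary~\ref{cordismin}, consists exactly of the six images $\varphi_\mathcal P(x)$, $\varphi_\mathcal P(y)$, $\varphi_\mathcal P(1/x)$, $\varphi_\mathcal P(1/y)$, $\varphi_\mathcal P(x/y)$, $\varphi_\mathcal P(y/x)$. To prove non-well-roundedness it suffices to show that these six vectors span an $\mathbb R$-subspace of $\mathbb R^{3n}$ of dimension strictly less than $\operatorname{rank}(\Lambda_n)$.

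First I would record the coordinates of the minimal vectors with respect to the ordering $(a_0,\dots,a_{n-1},b_0,\dots,b_{n-1},c_0,\dots,c_{n-1})$ of $\mathcal P$, using the divisor table: $\varphi_\mathcal P(x)$ is associated to $A-C$, $\varphi_\mathcal P(y)$ to $B-C$, $\varphi_\mathcal P(x/y)$ to $A-B$, and the remaining three are $-\varphi_\mathcal P(x)$, $-\varphi_\mathcal P(y)$, $-\varphi_\mathcal P(x/y)$. Since $\varphi_\mathcal P(x/y)=\varphi_\mathcal P(x)-\varphi_\mathcal P(y)$, all six minimal vectors lie in the span of $\{\varphi_\mathcal P(x),\varphi_\mathcal P(y)\}$; and this span is exactly two-dimensional because the vectors attached to $A-C$ and $B-C$ are visibly linearly independent in $\mathbb Z^{3n}$. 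Next I would invoke Proposition~\ref{propbasicas}(d) together with the finiteness of $\operatorname{Cl}^0(\mathcal P)$ for the Fermat ``points at infinity'' (Rohrlich's result~\cite{david}, used throughout Section~\ref{secfffl}), which gives $\operatorname{rank}(\Lambda_n)=|\mathcal P|-1=3n-1$. For $n\geqslant4$ we have $3n-1\geqslant 11>2$, so $\Lambda_n$ cannot contain $3n-1$ linearly independent minimal-length vectors, and hence fails to be well-rounded.

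There is essentially no obstacle here: the substance is already in the theorems fixing $K(\Lambda_n)=6$ and in the rank formula of Proposition~\ref{propbasicas}(d). The only point requiring care is the bookkeeping — checking that the six minimal vectors are precisely the images of $x^{\pm1},y^{\pm1},(x/y)^{\pm1}$ and that no sign or index slip accidentally enlarges their span beyond dimension two.
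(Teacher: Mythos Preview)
Your argument is correct and matches the paper's own proof essentially line for line: both invoke Rohrlich's finiteness result to get $\operatorname{rank}(\Lambda_n)=3n-1\geqslant11$, then observe from $K(\Lambda_n)=6$ that the minimal vectors span only a two-dimensional space. Your write-up is a bit more explicit about identifying the six minimal vectors and checking their span, but the underlying idea is identical.
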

\begin{proof}
	From \cite[Theorem 1]{david}, $\operatorname{Cl}^0(\mathcal P)$ is finite. Thus, the rank of $\Lambda_n$ for $n\geqslant4$ is $\operatorname{rank}(\Lambda_n)=3n-1\geqslant11$. Since there are only $6$ minimal vectors (only $2$ of which are linearly independent), the claim follows.
\end{proof}

\begin{observacao}
 This family of Fermat function field lattices has the property of exceeding the minimum distance lower bound and not being well-rounded, even for arbitrarily large dimensions. The lattices over elliptic function fields are not well-rounded only for low dimension (see \cite[Theorem 3.3]{maharaj}), and all the examples of function field lattices which are not well-rounded produced in \cite{ates,ates2} rely on the fact that the minimum distance is exactly $\sqrt{2\gamma}$. Also, having a fixed kissing number is a property not exhibited by elliptic or Hermitian function field lattices \cite[Theorem 3.2]{maharaj}, \cite[Theorem 6.2]{bottcher}.
\end{observacao}

\subsection{Second minimum distance}

In light of Theorem \ref{dismin2n}, it is natural to ask if the next shortest vectors of $\Lambda_n$ have length $\sqrt{2(n+1)}$ or if there are other norms which are not achieved.

We thus turn our attention to the problem of determining the second minimum distance of $\Lambda_n$. We will see that for smaller values of $n$, the vectors attaining the second minimum distance are linearly independent with the shortest vectors, whereas for larger values of $n$ this does not happen.

We denote the second minimum distance by $d_2(\Lambda_n)$. Note that some candidates are
\begin{itemize}
	\item $d_2(\Lambda_n)=\sqrt{n^2-n}$ if $n=4,5,6,7$, achieved by the functions $x-\varepsilon\zeta^jy$, for example.
	\item $d_2(\Lambda_n)=\sqrt{6n}$ for $n\geqslant7$, achieved by the function $xy$, for example.
\end{itemize}

\begin{teorema}\label{teo2distmin4}
	For $n=4$, $d_2(\Lambda_4)=\sqrt{n^2-n}=\sqrt{12}$.
\end{teorema}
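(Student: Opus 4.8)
The plan is to show that the value $\sqrt{2(n+1)} = \sqrt{10}$ (for $n=4$) is \emph{not} achieved by any vector of $\Lambda_4$, so that the next available even norm above $\sqrt{2n} = \sqrt{8}$ is $\sqrt{12} = \sqrt{n^2-n}$, which we already know is achieved (e.g. by $x - \varepsilon\zeta^j y$, whose divisor is $3c_j - (C - c_j)$, of norm $\sqrt{9 + 3} = \sqrt{12}$). First I would enumerate, using that $\Lambda_4 \subseteq A_{11}$ is even, the possible shapes of a divisor of a function $f \in \mathcal{O}_\mathcal{P}^*$ with $\|\varphi_\mathcal{P}(f)\|^2 = 10$: writing the multiplicities of zeros and poles, the only partitions of $10$ into squares compatible with $f$ having equal degree for $(f)_0$ and $(f)_\infty$ are the ``degree $5$'' case (all $10$ simple points, five zeros and five poles) and the ``degree $4$'' case (a double point together with three simple points on one side of the equality, and four simple points on the other). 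I would dispatch these two cases separately.

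For the degree-$4$ case, the argument is exactly the one already used in Remark~\ref{remdismin} and in the proof of Theorem~\ref{kisnumn5}: such an $f$ would have $4$ simple zeros and $3$ simple poles plus one double pole (or vice versa), so that $f$ or $1/f$ lies in a Riemann–Roch space $\mathcal{L}(D)$ with $\deg D = 4$ whose support contains a point of multiplicity $2$; since $\ell(D) \leqslant \ell(D')$ where $D' \geqslant 0$ is the associated simple divisor of degree $4$, and since by Proposition~\ref{proprrn-1} (applied after the reduction of the double point) such spaces are spanned by functions already catalogued in the tables, one checks directly that none of those functions has the required divisor shape. Concretely, the only degree-$4$ functions in $\mathcal{O}_\mathcal{P}^*$ with a point of multiplicity $\geqslant 2$ are, up to symmetry, $\frac{y - \zeta^j}{x}$, $\frac{x-\zeta^j}{y}$, $x - \varepsilon\zeta^j y$ and their inverses, all of which have a point of \emph{order} $n-1 = 3$, not $2$; hence the degree-$4$ case is empty.

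For the degree-$5$ case, $f$ has five simple zeros and five simple poles, all in $\mathcal{P}$, so $(f)_0$ and $(f)_\infty$ are each sums of $5$ distinct points of the $12$-point configuration $\mathcal{P}$, distributed among the three sides of the triangle. Here I would argue via the Riemann–Roch / residue method of Lemma~\ref{lemmadivint}: cut out $(f)_\infty$ with a minimal-degree product of the three coordinate lines that dominates it, compute the residue divisor $R$, and show that the resulting linear system forces the numerator curve to split off coordinate lines in such a way that $f$ is forced to coincide (up to scalar) with a product of the catalogued functions $x, y, x-\zeta^j, y-\zeta^j, x-\varepsilon\zeta^j y$ and their ratios — and then check that no such product has a divisor consisting of $10$ distinct points with $5$ zeros and $5$ poles and total degree $5$. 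The point is that any ``genuinely new'' $f$ of degree $5$ would give, via its numerator, a plane curve of low degree meeting $\mathcal{F}_4$ with prescribed multiplicities at many of the twelve special points, and a Bézout count will either force a coordinate line to be a component (sending us back to the catalogued functions) or produce extra zeros outside $\mathcal{P}$.

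The main obstacle will be the bookkeeping in the degree-$5$ case: there are several distributions of the $5$ zeros and $5$ poles of $f$ among the three sides (e.g. $(3,1,1)$ and $(2,2,1)$ for each of $(f)_0$ and $(f)_\infty$, with further interaction between the two), and for each one must run the residue argument and rule out a new function. I expect the cleanest route is to observe that if $f$ has degree $5$ and $(f)_\infty$ has $k$ points on a given side, then cutting with the $k$-fold-appropriate coordinate line and applying Bézout (Theorem~\ref{teobezout}) to $\mathcal{F}_4$ (degree $4$) shows that whenever $k \geqslant 2$ that side already carries ``too much'' intersection unless it is a component of the numerator, which rapidly collapses the analysis; the truly delicate sub-case is when every side carries few poles, which is impossible for degree $5$ since $5 > 4 = \deg \mathcal{F}_4$ would be needed to avoid splitting. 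Carrying this out carefully, one concludes that $\sqrt{10}$ is not attained, hence $d_2(\Lambda_4) = \sqrt{12} = \sqrt{n^2 - n}$.
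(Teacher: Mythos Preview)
Your enumeration of divisor shapes with $\|f\|^2 = 10$ is incomplete: you have missed the \emph{degree-$3$} case. The partition $10 = 4+1+4+1$ corresponds to a divisor of the form
\[
(f) = 2p_3 + p_4 - (2p_1 + p_2),
\]
with $\deg(f) = 3$ and $\|f\|^2 = 10$. This case does not reduce to either of the two you list, and it must be ruled out separately. The paper does this with a short Riemann--Roch computation: for $D = 2p_1 + p_2$ one has $i(D) = 0$ (the only line meeting $\mathcal{F}_4$ at $p_1$ with multiplicity $\geqslant 2$ is the tangent at $p_1$, which does not pass through $p_2$), so $\ell(D) = 1$ and no such $f$ exists. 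Without this, your proof has a genuine gap.

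Your degree-$4$ argument is also more tangled than necessary. Rather than working with the side containing the double point, the paper simply observes that either $(f)_0$ or $(f)_\infty$ consists of four \emph{simple} distinct points of $\mathcal{P}$, and the full analysis of such pole divisors has already been carried out in the proof of Theorem~\ref{teoK46}; this disposes of the case immediately.

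Finally, your degree-$5$ discussion is only a sketch, and the concluding heuristic is wrong: the claim that ``every side carries few poles'' is ``impossible for degree $5$ since $5 > 4 = \deg\mathcal{F}_4$'' does not make sense --- a distribution such as $(2,2,1)$ of the five poles over the three sides is perfectly possible, and nothing about $\deg\mathcal{F}_4$ prevents it. The paper instead runs a genuine case-by-case residue analysis (cases: four points aligned; $3+2$ on two sides; $3+1+1$; $2+2+1$), producing explicit bases for each $\mathcal{L}(D)$ and checking that every nontrivial element has a zero or pole outside $\mathcal{P}$. Your Bézout-splitting idea is in the right spirit, but it does not collapse the casework the way you suggest, and the $(2,2,1)$ configuration in particular needs an explicit construction (the paper uses auxiliary lines $s_1,s_2,w_1,w_2$ through the residue points).
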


\begin{proof}
	For any $f\in\mathcal O_\mathcal P^*$, $||f||\geqslant\sqrt{2\deg(f)}$. Thus, if $f$ is a function such that $d(\Lambda_4)=\sqrt8<||f||<\sqrt{12}$, then $3\leqslant\deg(f)\leqslant5$. Since $\Lambda_4$ is an even lattice, we need only to check for functions in $\mathcal O_\mathcal P^*$ which could have length $\sqrt{10}$.
	
	Suppose $\deg(f)=3$, then the only possibility is to have $p_1,p_2,p_3,p_4\in\mathcal P$ such that $(f)=2p_3+p_4-(2p_1+p_2)$, since in this case $||f||=\sqrt{2+4+4}=\sqrt{10}$. 
 From this, let $D=2p_1+p_2$. There are no lines passing through $D$, since the only line intersecting $p_1$ with order greater than $1$ is its tangent, which does not pass through $p_2$, thus $i(D)=0$ and by the Riemann-Roch Theorem, $\ell(D)=1$, proving such a function $f$ does not exist.
	
	Now, if $\deg(f)=4$, the only possibility in order to have $||f||=\sqrt{10}$ is for the principal divisor of $f$ to have the form $\pm(f)=2p_1+p_2+p_3-(p_4+p_5+p_6+p_7)$ for distinct $p_1,\dots,p_7\in \mathcal P$. In any case, either the zero or pole divisor of $f$ must consist of $4$ distinct points with order $1$, which we know to be impossible from Theorem \ref{teoK46}.
	
	Finally, if $\deg(f)=5$ and $(f)$ has $10$ simple points, length $\sqrt{10}$ is also achieved. Let us show this configuration is impossible. Take $p_1,\dots,p_5\in\mathcal P$ distinct points and $D=p_1+\dots+p_5$. Note that $\deg (D)=5\geqslant2\cdot3-1=2\cdot g(\mathcal F_4)-1$, thus $\ell(D)=\deg (D)+1-g(\mathcal F_4)=3$.
 
 
	If $D$ has $4$ aligned points, then $\mathcal L(D)=\left\langle1,\frac1x,\frac{y}{x}\right\rangle,\left\langle1,\frac1y,\frac{x}{y}\right\rangle$ or $\left\langle1,x,y\right\rangle$, which produce functions not in $\mathcal O_\mathcal P^*$.
	
	Assuming $D$ has $3$ points on a line, without loss of generality, let $D=a_{i_1}+a_{i_2}+a_{i_3}+b_{j_1}+b_{j_2}$ be cut by $XY=0$, leaving residue $R=a_{i_4}+b_{j_3}+b_{j_4}$. Taking $r_1$ the line joining $a_{i_4}$ and $b_{j_3}$ and $r_2$ the line joining $a_{i_4}$ and $b_{j_4}$, $r_1r_2=0$ passes through $R$, and $\left\{1,\frac{y-\zeta^{i_4}}{x},\frac{r_1r_2}{xy}\right\}$ is a base for $\mathcal{L}(D)$ whose non-trivial linear combinations produce functions not belonging to $\mathcal O_\mathcal P^*$ by the same arguments as before.
	
	If $D=a_{i_1}+a_{i_2}+a_{i_3}+b_{j_1}+c_{k_1}$, we cut it out by the conic $xr=0$, where $r$ is the line joining $b_{j_1}$ and $c_{k_1}$. Then $R=a_{i_4}+q_1+q_2$, with $q_1,q_2\notin\mathcal P$. Taking $s_1$ the line defined by $q_1$ and $a_{i_4}$ and $s_2$ the line defined by $q_2$ and $a_{i_4}$, the conic $s_1s_2=0$ passes through $R$, implying $\left\{1,\frac{y-\zeta^{i_4}}{x},\frac{s_1s_2}{xr}\right\}$ is a base for $\mathcal L(D)$, which produces functions with zeros or poles outside of $\mathcal P$.
	
	Finally, in the case $D=a_{i_1}+a_{i_2}+b_{j_1}+b_{j_2}+c_{k_1}$, let $s_1$ be the line joining $a_{i_1}$ and $c_{k_1}$ and $s_2$ the line joining $a_{i_2}$ and $c_{k_1}$. We cut out $D$ by the cubics $xys_1=0$ and $xys_2=0$, which produces residues $R=A-a_{i_2}+b_{j_3}+b_{j_4}+q_1+q_2$ and $R=A-a_{i_1}+b_{j_3}+b_{j_4}+q_3+q_4$, respectively. Considering $w_1$ the line joining $q_1$ and $q_2$ and $w_2$ the line joining $q_3$ and $q_4$, the cubics $xyw_1=0$ and $xyw_2=0$ pass through the respective residues. Therefore, $\left\{1,\frac{xyw_1}{xys_1},\frac{xyw_2}{xys_2}\right\}=\left\{1,\frac{w_1}{s_1},\frac{w_2}{s_2}\right\}$ is a base $\mathcal L(D)$, producing functions outside of $\mathcal O_\mathcal P^*$. Since this is the last case possible case that would achieve norm $\sqrt{10}$, it is proved that $d_2(\Lambda_4)=\sqrt{12}$.
\end{proof}

For $n\geqslant5$, we make use of the following result whose proof can be found in \cite[Corollary 1]{david}.

\begin{teorema}\label{propdr}
	A divisor of degree $0$ with support contained in $\mathcal P$ is a principal divisor if and only if the reduction of its coefficients modulo $n$ is in the span of $A, B, C$ and
	\begin{align*}
		D_1&\coloneqq\sum_{j=0}^{n-1}j(a_j+b_j)\\
		D_2&\coloneqq\sum_{j=0}^{n-1}j(b_j+c_j)\\
		D_3&\coloneqq\sum_{j=0}^{n-1}j(j+1)(a_j+b_j+c_j).
	\end{align*}
\end{teorema}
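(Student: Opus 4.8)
The plan is to reconstruct R\"ohrlich's analysis of the cuspidal divisor class group of $F_n$ \cite{david}. The crucial first step is to reduce the whole question modulo $n$. The table displayed before the statement shows that $y-\zeta^j$, $x-\zeta^j$ and $x-\varepsilon\zeta^jy$ have divisors $na_j-C$, $nb_j-C$ and $nc_j-C$, so that $nP\sim C$ for \emph{every} $P\in\mathcal P$; consequently $nP-nQ$ is principal for all $P,Q\in\mathcal P$, i.e. $n\operatorname{Div}^0(\mathcal P)\subseteq\operatorname{Princ}(\mathcal P)$. Moreover the kernel of the reduction map $\operatorname{Princ}(\mathcal P)\to(\mathbb Z/n\mathbb Z)^{3n}$ is exactly $n\operatorname{Div}^0(\mathcal P)$ (a degree argument), so a degree-$0$ divisor $D$ supported on $\mathcal P$ is principal if and only if its reduction $\bar D$ lies in the $\mathbb Z/n\mathbb Z$-submodule $\overline{\operatorname{Princ}}\subseteq(\mathbb Z/n\mathbb Z)^{3n}$ spanned by the reductions of all principal divisors. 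The theorem then amounts to the identity $\overline{\operatorname{Princ}}=M$, where $M$ is the submodule spanned by $\bar A,\bar B,\bar C,\bar D_1,\bar D_2,\bar D_3$ (all of which, as one checks, have degree divisible by $n$, so $M$ lies in the ``degree $\equiv 0$'' hyperplane, a free $\mathbb Z/n\mathbb Z$-module of rank $3n-1$).

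For the inclusion $M\subseteq\overline{\operatorname{Princ}}$, the classes $\bar A,\bar B,\bar C$ are immediate: $(x)=A-C$ and $(x-\varepsilon y)^{-1}=C-nc_0\equiv C\pmod n$ yield $\bar C$, hence $\bar A=\overline{A-C}+\bar C$ and, by symmetry, $\bar B$ from $y$. The substantive work is to produce three functions $f_1,f_2,f_3\in F_n$ whose divisors reduce, modulo $n$ and modulo the span of $A,B,C$, to $D_1,D_2,D_3$. I would look for these among ratios of products of the linear forms $x,\,y,\,x-\zeta^j,\,y-\zeta^j,\,x-\varepsilon\zeta^jy$ with index-dependent exponents, exploiting the factorizations $x^n-1=\prod_j(x-\zeta^j)$, $y^n-1=\prod_j(y-\zeta^j)$, $x^n+y^n=\prod_j(x-\varepsilon\zeta^jy)$ to control the divisors; the ``diagonal'' weights $j$ and $j(j+1)$ occurring in $D_1,D_2,D_3$ are precisely what is needed to record the mutual position of the three sides of the triangle. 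Once the $f_i$ are written down, verifying $(f_i)\equiv D_i$ is a routine valuation computation.

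For the reverse inclusion $\overline{\operatorname{Princ}}\subseteq M$ I would argue by orders. From the above, $h_{\mathcal P}=|\operatorname{Cl}^0(\mathcal P)|=n^{3n-1}/|\overline{\operatorname{Princ}}|$, while $|M|$ is computed directly from the $6\times 3n$ coefficient matrix of $A,B,C,D_1,D_2,D_3$ (its Smith normal form modulo $n$). Granted $M\subseteq\overline{\operatorname{Princ}}$, the equality $\overline{\operatorname{Princ}}=M$ follows as soon as $|M|\geqslant n^{3n-1}/h_{\mathcal P}$, i.e. as soon as one has the matching lower bound for the class number $h_{\mathcal P}$ of the cuspidal group; R\"ohrlich obtains this from the action of the automorphism group $(\mathbb Z/n\mathbb Z)^2\rtimes S_3$ of $F_n$ on the cuspidal points together with a character-by-character analysis of its Jacobian.

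The main obstacle is the construction of $f_1,f_2,f_3$ in the second step: the naive products $\prod_j(x-\zeta^j)^{e_j}$, $\prod_j(y-\zeta^j)^{e_j}$, $\prod_j(x-\varepsilon\zeta^jy)^{e_j}$ contribute only integer multiples of $A$, $B$, $C$ modulo $n$, so genuinely new functions tailored to the $(\mathbb Z/n\mathbb Z)^2$-cyclic-cover structure of the Fermat curve are needed. Everything else---the mod-$n$ reduction, the treatment of $\bar A,\bar B,\bar C$, and the final counting---is essentially bookkeeping once the class number $h_{\mathcal P}$ is in hand.
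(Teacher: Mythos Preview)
The paper does not prove this statement: it is quoted from \cite[Corollary~1]{david} and used as a black box throughout Section~\ref{secfffl}. Your proposal is therefore an attempted reconstruction of R\"ohrlich's argument, not something to be compared against a proof in the present paper.

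Your skeleton is correct and matches R\"ohrlich's strategy: the relations $nP\sim C$ for every $P\in\mathcal P$ reduce the question to identifying a $(\mathbb Z/n\mathbb Z)$-submodule $\overline{\operatorname{Princ}}$ of $(\mathbb Z/n\mathbb Z)^{3n}$, and one inclusion together with a cardinality count against $h_{\mathcal P}$ (Theorem~\ref{teoclp}, i.e.\ \cite[Theorem~1]{david}) finishes the argument. Your handling of $\bar A,\bar B,\bar C$ via $x$, $y$ and $(x-\varepsilon y)^{-1}$ is fine, and your computation of the kernel of the reduction map is correct.

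The obstacle you isolate---exhibiting explicit $f_i\in F_n$ with $(f_i)\equiv D_i\pmod n$---is genuine, and your observation that monomials in the linear forms $x-\zeta^j$, $y-\zeta^j$, $x-\varepsilon\zeta^jy$ contribute only to $\langle\bar A,\bar B,\bar C\rangle$ is correct. But in R\"ohrlich's treatment this step and the determination of $h_{\mathcal P}$ are not independent pieces to be assembled in the order you propose: the structure theorem (his Theorem~1) is proved first, via the $(\mu_n\times\mu_n)$-character decomposition of the cuspidal group inside the Jacobian, and the span description (his Corollary~1) is then read off from it. The representation-theoretic work you relegate to a parenthetical in your last paragraph is the heart of the matter, and the search for explicit functions $f_1,f_2,f_3$ is a detour that R\"ohrlich does not take.
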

\begin{proposicao}\label{propredmodn}
	Let $f\in\mathcal O_\mathcal P^*$ be such that $||f||^2<6n$. Then, all coefficients of $(f)$ have absolute value smaller than $n$ In particular, $(f)$ is reduced modulo $n$.
\end{proposicao}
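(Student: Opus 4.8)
The plan is a short counting argument by contradiction. Write the principal divisor as $(f)=\sum_{P\in\mathcal P}m_P\,P$, so that $||f||^2=\sum_{P\in\mathcal P}m_P^2$, and recall that $\sum_{P\in\mathcal P}m_P=0$ since principal divisors have degree zero; set $d=\deg(f)=\sum_{m_P>0}m_P=\sum_{m_P<0}(-m_P)$. If $f$ is constant there is nothing to prove, so assume $f$ is non-constant. Suppose, toward a contradiction, that some coefficient satisfies $|m_{P_0}|\geqslant n$. Since $1/f$ has the same norm as $f$ and the coefficients of $(1/f)$ are the negatives of those of $(f)$, we may assume $m_{P_0}\geqslant n$; in particular $P_0$ is a zero of $f$ and $d\geqslant m_{P_0}\geqslant n$.

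Next I would estimate the zero and pole contributions to $||f||^2$ separately. On the zero side, $P_0$ alone contributes $m_{P_0}^2\geqslant n^2$, hence $\sum_{m_P>0}m_P^2\geqslant n^2$. On the pole side, each negative coefficient satisfies $m_P^2\geqslant -m_P\geqslant 1$, so $\sum_{m_P<0}m_P^2\geqslant\sum_{m_P<0}(-m_P)=d\geqslant n$. Adding the two estimates gives $||f||^2\geqslant n^2+n=n(n+1)$. Since this proposition is used under the standing hypothesis $n\geqslant 5$, we have $n+1\geqslant 6$, and therefore $||f||^2\geqslant n(n+1)\geqslant 6n$, contradicting $||f||^2<6n$. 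Consequently $|m_P|<n$ for every $P\in\mathcal P$, which is precisely the asserted bound; in particular $(f)$ is already reduced modulo $n$, so Theorem \ref{propdr} applies to it directly.

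I do not expect a genuine obstacle here: once one notices that a coefficient of size $\geqslant n$ simultaneously forces a zero-contribution $\geqslant n^2$ and a pole divisor of degree $\geqslant n$ (hence pole-contribution $\geqslant n$), the conclusion is immediate. The only point that needs care is the role of the hypothesis $n\geqslant 5$: the inequality $n(n+1)\geqslant 6n$ is tight exactly at $n=5$ (where it reads $30\geqslant 30$, still contradicting the strict inequality $||f||^2<6n$), and it fails for $n=4$ — indeed $x-\zeta^j$, whose divisor is $n\,b_j-C$, has $||x-\zeta^j||^2=n^2+n=20<24=6n$ yet a coefficient equal to $n$. So one should keep the restriction $n\geqslant 5$ explicit (it is already in force in this part of the paper, which is where Theorem \ref{propdr} is invoked).
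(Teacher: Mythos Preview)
Your argument is correct and follows the same contradiction strategy as the paper: assume a coefficient of size at least $n$ and bound $||f||^2$ from below. The paper writes $||f||^2\geqslant n^2+(k-1)$ with $k$ the number of nonzero coefficients, then splits into $n\geqslant6$ (where $n^2\geqslant6n$ suffices) and a separate ad hoc check for $n=5$; your version instead exploits $\deg(f)\geqslant n$ on the pole side to get the sharper bound $||f||^2\geqslant n^2+n=n(n+1)$, which handles all $n\geqslant5$ uniformly. This is a minor but genuine streamlining of the paper's proof, and your observation that the statement fails for $n=4$ via $x-\zeta^j$ is a useful addendum.
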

\begin{proof}
	Suppose $(f)=\sum_{j=1}^km_jp_j$ and $|m_i|\geqslant n$ for some $i\in\{1,\dots,k\}$. Then
	\[||f||^2=\sum_{j=1}^km_j^2\geqslant n^2+\sum_{\substack{j=1\\j\neq i}}^km_j^2\geqslant n^2+(k-1).\]
	If $n\geqslant6$, then $n^2\geqslant6n$ and $||f||^2>6n$, a contradiction. Now suppose that $n=5$ and some point of $(f)$ has order $5$. Since $(f)$ has degree $0$, the form that has the smallest norm is $\pm(f)=5p_1-(p_2+p_3+p_4+p_5+p_6)$, which produces $||f||^2=5^2+5=30=6n$, a contradiction. Therefore, $|m_j|\leqslant n-1$ for all $j=1,\dots,k$.
\end{proof}

As a consequence of Theorem \ref{propdr} and Proposition \ref{propredmodn}, we shall focus on the possible norms attained by divisors generated by $A,B,C,D_1,D_2,D_3$. We may write the reduction modulo $n$ of a principal divisor as
\[\overline{(f)}=m_1A+m_2B+m_3C+m_4D_1+m_5D_2+m_6D_3,\]
and all coefficients must be reduced modulo $n$, which directly implies $m_6=0$ and $m_4,m_5\in\{0,\pm1\}$. For the same reason, we cannot have $m_4=m_5=\pm1$, since the points $b_j$ appear on both $D_1$ and $D_2$. We now examine the minimum possible norm for all remaining combinations of coefficients in order to obtain the values of the second minimum distance.

In the case where $(m_4,m_5)\in\{(1,-1),(-1,1)\}$, the smallest norm occurs when $m_2=0$.  Since we want the coefficients to remain reduced modulo $n$, we must have $m_1$ and $m_4$ with opposite signs, as well as, $m_3$ and $m_5$ with opposite signs. The, by symmetry,  $|m_1|=|m_3|$. If we then suppose $m_4=1$ and $m_5=-1$, we minimize the norm of a function $f$ such that
\[\overline{(f)}=m_1A-m_1C+D_1-D_2.\]
We may write the expression for the norm as a function of $m_1$:
\[\sum_{j=0}^{n-1}(m_1+j)^2+(-m_1-j)^2=2\cdot\sum_{j=0}^{n-1}(m_1+j)^2\eqqcolon\phi(m_1).\]
To minimize $\phi$, we compute its first derivative
	\[\phi'(m_1)=\sum_{j=0}^{n-1}4(m_1+j)=\sum_{j=0}^{n-1}4m_1+\sum_{j=0}^{n-1}4j=4m_1n+2n(n-1)=2n(2m_1+n-1).\]
Thus,
\[\phi'(m_1)=0\iff2m_1+n-1=0\iff m_1=\frac{1-n}{2}.\]
If $n$ is odd, $m_1\in\mathbb Z$ and
\[\phi\left(\frac{1-n}{2}\right)=\sum_{j=0}^{n-1}2\left(\frac{1-n}{2}+j\right)^2=\frac16(n-1)n(n+1)>6n\text{ for }n\geqslant7.\]	
If $n$ is even, then $\left\lfloor\frac{1-n}{2}\right\rfloor=-\frac{n}{2}$ and $\left\lceil\frac{1-n}{2}\right\rceil=1-\frac{n}{2}$ are the integers minimizing $\phi$. In this case,
\[\phi\left(1-\frac{n}{2}\right)=\phi\left(-\frac{n}{2}\right)=\frac16n(n^2+2)>6n\text{ for }n\geqslant6.\]
Now if $m_4=-1$ and $m_5=1$, the reduction of $(f)$ is
\[\overline{(f)}=m_1A-m_1C-D_1+D_2.\]
Its norm as a function of $m_1$ is:
\[\sum_{j=0}^{n-1}(m_1-j)^2+(-m_1+j)^2=2\cdot\sum_{j=0}^{n-1}(m_1-j)^2\eqqcolon\psi(m_1),\]
with derivative $\psi'(m_1)=2n(2m_1-n+1)$. Thus,
\[\psi'(m_1)=0\iff m_1=\frac{n-1}{2}.\]
Again, if $n$ is odd, $m_1$ is an integer and
\[\psi\left(\frac{n-1}{2}\right)=\sum_{j=0}^{n-1}2\left(\frac{n-1}{2}+j\right)^2=\frac16(n-1)n(n+1)>6n\text{ for }n\geqslant7.\]	
If $n$ is even, $\left\lfloor\frac{n-1}{2}\right\rfloor=\frac{n}{2}-1$ and $\left\lceil\frac{n-1}{2}\right\rceil=\frac{n}{2}$ are the integers minimizing $\psi$. In this case,
\[\psi\left(\frac{n}{2}-1\right)=\psi\left(\frac{n}{2}\right)=\frac16n(n^2+2)>6n\text{ for }n\geqslant6.\]
Now assume that only one between $m_4$ and $m_5$ is zero. A similar argument shows that
\begin{align*}
	m_4=0&\implies m_1=0\text{ and }m_2=m_3\\
	m_5=0&\implies m_3=0\text{ and }m_1=m_2.
\end{align*}
The divisors thus obtained and the square of their norms are
\[\begin{array}{c|c|c}
	(m_4,m_5) & \text{Divisor}&\text{Norm} \\
	\hline
	(1,0)& m_1A+m_1B+D_1& 2\displaystyle\sum_{j=0}^{n-1}(m_1+j)^2\\
	\hline
	(-1,0)& m_1A+m_1B-D_1& 2\displaystyle\sum_{j=0}^{n-1}(m_1-j)^2\\
	\hline
	(0,1)& m_2B+m_2C+D_2& 2\displaystyle\sum_{j=0}^{n-1}(m_2+j)^2\\
	\hline
	(0,-1)& m_2B+m_2C-D_2& 2\displaystyle\sum_{j=0}^{n-1}(m_2-j)^2.
\end{array}\]
All of these cases have already been taken care of, leaving only the case $m_4=m_5=0$ to be examined.

Consider a divisor $D$ of the form $D=m_1A+m_2B+m_3C$. Then, $D$ has degree $0$ if $m_1+m_2+m_3=0$. However, it is possible that $m_1+m_2+m_3\neq0$. Take the divisors $m_1A,m_2B$ and $m_3C$ separately and notice that if the reduction of a divisor is equal to any of these three, then the only possibility is for the degree $0$ divisor to be one of the following principal divisors:
\begin{align*}
	\left(\prod_{j=1}^{m_1}\frac{y-\zeta^{i_j}}{x}\right)&=(n-m_1)(a_{i_1}+\cdots+a_{i_{m_1}})-m_1(a_{i_{m_1}+1}+\cdots+a_{i_n})\\
	\left(\prod_{j=1}^{m_2}\frac{x-\zeta^{i_j}}{y}\right)&=(n-m_2)(b_{i_1}+\cdots+b_{i_{m_2}})-m_2(b_{i_{m_2}+1}+\cdots+b_{i_n})\\
	\left(\prod_{j=1}^{m_3}x-\varepsilon\zeta^{i_j}y\right)&=(n-m_3)(c_{i_1}+\cdots+c_{i_{m_3}})-m_3(c_{i_{m_3}+1}+\cdots+c_{i_n}).
\end{align*}
Since there is no intersection between the supports of these divisors, the smallest norm occurs when only one of the coefficients is non-zero. Without loss of generality, we may assume only $m_1\neq0$ and write the squared norm as a function of $m_1$:
\[m_1\cdot(n-m_1)^2+(n-m_1)\cdot m_1^2=-nm_1^2+n^2m_1\eqqcolon\mu(m_1).\]
The maximum of $\mu$ occurs at $-\frac{n^2}{2\cdot(-n)}=\frac{n}{2}$, thus it is strictly increasing in the interval $\left[0,\frac{n}{2}\right]$, implying its minimum occurs at $m_1=1$. It follows that $\mu(1)=n^2-n\geqslant6n$ for $n\geqslant7$ and $\mu(1)=n^2-n<6n$ if $n=5$ or $n=6$.
	
Finally, we consider the case $m_1+m_2+m_3=0$. We initially note that the possible configurations with $m_1,m_2,m_3\in\{0,\pm1\}$ generate the $6$ functions attaining length $\sqrt{2n}$. Naturally, the smallest norms after that occur when $m_1,m_2,m_3\in\{0,\pm1,\pm2\}$. If one of the coefficients is zero, and two have absolute value $2$, the squared norm would be $n\cdot2^2+n\cdot2^2=8n$, thus the only possibility is to have one coefficient with absolute value $2$ and two with absolute value $1$, giving the norm $n\cdot1^2+n\cdot1^2+n\cdot2^2=6n$.

Since $\sqrt{6n}$ is the second shortest norm generated by these divisors, it follows that for $n\geqslant7$, the second minimum distance is indeed $\sqrt{6n}$. From our previous observations, it also follows that the second minimum distance is $\sqrt{n^2-n}$ for $n=6$. Finally, if $n=5$, the second minimum distance is the same, since the only other candidate is $\frac16(n-1)n(n+1)$, which equals $n^2-n$ when $n=5$. Thus, we have proved the following result:
\begin{teorema}\label{teo2distmin}
	$d_2(\Lambda_n)=\sqrt{n^2-n}$ for $n=4,5,6,7$ and $d_2(\Lambda_n)=\sqrt{6n}$ for $n\geqslant7$.
\end{teorema}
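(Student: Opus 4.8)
The plan is to assemble the structural facts already established. Since $\Lambda_n$ is even, $d_2(\Lambda_n)^2$ is the least even integer strictly larger than $d(\Lambda_n)^2 = 2n$ that occurs as $\|\varphi_{\mathcal P}(f)\|^2$ for some non-constant $f \in \mathcal O_{\mathcal P}^*$. The case $n = 4$ is precisely Theorem~\ref{teo2distmin4}, so I would invoke it directly; for $n \geqslant 5$ I would argue by an upper bound (exhibiting a short vector) and a matching lower bound.

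For the upper bound, the function $x - \varepsilon\zeta^j y$ has divisor $(n-1)c_j - (C - c_j)$, so $\|\cdot\|^2 = (n-1)^2 + (n-1) = n^2 - n$, and $xy$ has divisor $A + B - 2C$, so $\|\cdot\|^2 = n + n + 4n = 6n$. Hence $d_2(\Lambda_n)^2 \leqslant \min\{n^2 - n,\, 6n\}$.

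For the lower bound I would use Proposition~\ref{propredmodn}: any $f$ with $\|f\|^2 < 6n$ has $(f)$ reduced modulo $n$, so Theorem~\ref{propdr} gives $\overline{(f)} = m_1 A + m_2 B + m_3 C + m_4 D_1 + m_5 D_2 + m_6 D_3$ with every coefficient in $\{-(n-1), \dots, n-1\}$. Reading off that the coefficient of $a_j$ is $m_1 + m_4 j + m_6 j(j+1)$ and that of $b_j$ is $m_2 + (m_4 + m_5)j$, the reducedness constraint forces $m_6 = 0$, $m_4, m_5 \in \{0, \pm 1\}$, and $(m_4, m_5) \notin \{(1,1),(-1,-1)\}$. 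It then remains to minimize the squared norm over the finitely many cases for $(m_4, m_5)$. When exactly one of $m_4, m_5$ equals $\pm 1$, or they are $\pm 1$ of opposite signs, the reduced divisor has, up to the symmetry of the triangle, the shape $m_1 A - m_1 C \pm (D_1 - D_2)$, with squared norm a convex quadratic $\phi(m_1) = 2\sum_{j=0}^{n-1}(m_1 \mp j)^2$ in the integer $m_1$; its minimum near $m_1 = \pm\tfrac{n-1}{2}$ equals $\tfrac16 (n-1)n(n+1)$ for $n$ odd and $\tfrac16 n(n^2+2)$ for $n$ even, and a short comparison shows both strictly exceed $6n$ for $n \geqslant 6$ while the odd value equals $n^2 - n = 20$ at $n = 5$. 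When $m_4 = m_5 = 0$, so $\overline{(f)} = m_1 A + m_2 B + m_3 C$, either $m_1 + m_2 + m_3 \neq 0$, in which case disjointness of $\operatorname{supp} A$, $\operatorname{supp} B$, $\operatorname{supp} C$ together with the explicit factorizations $\prod_j (y-\zeta^{i_j})/x$ and its analogues forces $(f)$ to be supported on a single side with squared norm $\mu(m_1) = -n m_1^2 + n^2 m_1 \geqslant \mu(1) = n^2 - n$; or $m_1 + m_2 + m_3 = 0$, where the coefficient vectors in $\{0, \pm 1\}^3$ recover exactly the six minimal vectors of squared norm $2n$ and the next configuration, one coefficient $\pm 2$ and the other two $\pm 1$, has squared norm $6n$.

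Collecting these bounds, the least squared norm exceeding $2n$ is $6n$ for $n \geqslant 7$ (because $n^2 - n \geqslant 6n$ exactly when $n \geqslant 7$), is $n^2 - n = 30$ for $n = 6$, and is $n^2 - n = 20$ for $n = 5$ (where the competing value $\tfrac16(n-1)n(n+1)$ also equals $20$); with Theorem~\ref{teo2distmin4} for $n = 4$ this gives the stated dichotomy, the two formulas coinciding at $n = 7$ since $n^2 - n = 6n = 42$. The main obstacle is the bookkeeping in the $(m_4, m_5)$-case analysis --- in particular the claim that a principal divisor whose reduction modulo $n$ is supported on one side of the triangle is genuinely supported on that side, so that the norm-minimizing configurations in that regime are exactly the ``pure'' ones --- together with the delicate arithmetic near the thresholds $n = 5, 6, 7$, where $\tfrac16(n-1)n(n+1)$, $\tfrac16 n(n^2+2)$, $n^2 - n$ and $6n$ swap their relative order.
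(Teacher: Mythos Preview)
Your proposal is correct and follows essentially the same route as the paper: invoke Theorem~\ref{teo2distmin4} for $n=4$, exhibit $x-\varepsilon\zeta^j y$ and $xy$ for the upper bound, and for $n\geqslant 5$ combine Proposition~\ref{propredmodn} with Theorem~\ref{propdr} to reduce to a case analysis on $(m_4,m_5)$, minimizing the same quadratics $\phi,\psi,\mu$ and comparing $\tfrac16(n-1)n(n+1)$, $\tfrac16 n(n^2+2)$, $n^2-n$, and $6n$ at the thresholds $n=5,6,7$. Your consolidation of the single-nonzero and opposite-sign cases into the common norm form $2\sum_{j}(m_1\pm j)^2$ is a harmless streamlining of the paper's separate tables, and the obstacle you flag---that a principal divisor whose reduction is supported on one side must genuinely be supported there---is exactly the point the paper handles by listing the explicit product functions.
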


\begin{observacao}
 This approach can be used to give an alternative proof of Theorems \ref{dismin2n} and \ref{kisnumn5}. However, Proposition \ref{propdr} works only for the specific set of ``points at infinity'' on Fermat curves. Since the divisors considered for the minimum distance usually have only simple points, studying Riemann-Roch spaces is a line of reasoning that can be applied to any other set of points on any function field. 
\end{observacao}

\begin{observacao}
	 Theorem \ref{teo2distmin} has the following consequences. For $n=4,5,6,7$, the vectors realizing the second minimum distance are linearly independent with those realizing the minimum distance. However, for $n\geqslant8$ this does not happen, as the functions realizing the norm $\sqrt{6n}$ are $xy,x^2/y$ and $x/y^2$ and their inverses. Indeed, these vectors are linearly dependent with those attaining the minimum distance.
	
	Also, considering only divisors in the span of $A,B$ and $C$, the smallest norm after $\sqrt{6n}$ is $\sqrt{8n}$. Hence, for $n=8$ and $n=9$, the third minimum distance would be $\sqrt{n^2-n}$, which is attained by vectors that are linearly independent with those realizing the first and the second minimum distance. However, as $n$ grows, the norm of vectors linearly independent with the set that generated the minimum distance grows larger than this. In fact, they generate vectors whose squared norms scale linearly with $n$, while the first linearly independent vector has squared norm that scales quadratically with $n$.
\end{observacao}

\subsection{Volume}

As previously mentioned, the group $\operatorname{Cl}^0(\mathcal P)$ is finite for the set $\mathcal P$ of ``points at infinity''. More than that, $\operatorname{Cl}^0(\mathcal P)$ is explicitly described by the following result:

\begin{teorema}\label{teoclp}\cite[Theorem 1]{david}
If $\mathcal P$ is the set of ``points at infinity'' of a Fermat curve of degree $n\geqslant3$, then
\[\operatorname{Cl}^0(\mathcal P)\simeq\begin{cases}
 \begin{aligned}
 &\mathbb Z_n^{3n-7},&\text{ if }&n\text{ is odd}\\
 &\mathbb Z_n^{3n-7}\times\mathbb Z_2,&\text{ if }&n\text{ is even}
 \end{aligned}
\end{cases}\]
\end{teorema}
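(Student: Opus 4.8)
\medskip

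\noindent\textbf{Proof proposal.} The plan is to reduce the statement to a Smith normal form computation for the sublattice $\Lambda_n\subseteq A_{3n-1}$, where $A_{3n-1}=\{x\in\mathbb Z^{3n}:\sum_i x_i=0\}$, with Theorem~\ref{propdr} as the essential input. As in the proof of Proposition~\ref{propbasicas}(e) applied to the $3n$ ``points at infinity'', the homomorphism $\eta$ identifies $\operatorname{Cl}^0(\mathcal P)$ with $A_{3n-1}/\Lambda_n$. First I would record that $nA_{3n-1}\subseteq\Lambda_n$: for any $D\in A_{3n-1}$ the divisor $nD$ reduces to $0$ modulo $n$ and hence is principal by Theorem~\ref{propdr} (one may also simply exhibit $x/(y-1)$, $(y-\zeta^j)/(y-1)$, $(y-1)/(x-1)$, and so on, as functions with divisors $n(a_j-a_0)$, $n(a_0-b_0)$, etc.). In particular $\operatorname{Cl}^0(\mathcal P)$ is finite of exponent dividing $n$, and Theorem~\ref{propdr} then yields the clean description
\[
\Lambda_n \;=\; nA_{3n-1}\;+\;\bigl\langle\,\widetilde A,\widetilde B,\widetilde C,\widetilde D_1,\widetilde D_2,\widetilde D_3\,\bigr\rangle_{\mathbb Z},
\]
where $\widetilde A,\dots,\widetilde D_3$ are fixed degree-zero representatives of $A,B,C,D_1,D_2,D_3$ (obtained, say, by subtracting a suitable multiple of $a_0$). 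Everything now reduces to the index and the isomorphism type of this sublattice.

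For the \emph{order}, reduce modulo $n$: then $\operatorname{Cl}^0(\mathcal P)\cong V/W_0$ with $V=A_{3n-1}/nA_{3n-1}\cong(\mathbb Z/n\mathbb Z)^{3n-1}$ and $W_0\subseteq(\mathbb Z/n\mathbb Z)^{3n}$ the $\mathbb Z/n\mathbb Z$-span of $\overline A,\dots,\overline D_3$. An arbitrary relation $\alpha\overline A+\beta\overline B+\gamma\overline C+\delta\overline D_1+\epsilon\overline D_2+\phi\overline D_3=0$ reads, on the $a_j$-coordinates, $\alpha+\delta j+\phi j(j+1)\equiv 0\pmod n$, and likewise with $(\alpha,\delta)$ replaced by $(\beta,\delta+\epsilon)$ on the $b_j$-coordinates and by $(\gamma,\epsilon)$ on the $c_j$-coordinates, for $0\le j\le n-1$. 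Taking $j=0$ forces $\alpha=\beta=\gamma=0$; taking $j=1$ in what is left forces $\delta=\epsilon=-2\phi$; and then all conditions collapse to $\phi\,j(j-1)\equiv 0$ and $\phi\,j(j-3)\equiv 0$ for every $j$, which --- since $j(j-1)$ and $j(j-3)$ are always even and equal $\pm 2$ at a small $j$ --- is equivalent to the single condition $2\phi\equiv 0\pmod n$. Thus the relation module is $\{(0,0,0,-2\phi,-2\phi,\phi):2\phi\equiv0\}$, of order $\gcd(2,n)$, so $|W_0|=n^6$ if $n$ is odd and $n^6/2$ if $n$ is even, whence $h_{\mathcal P}=|V/W_0|=n^{3n-7}$ (resp. $2n^{3n-7}$), matching the claimed group.

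The subtler point is the \emph{group structure}, since the order alone does not determine it once $4\mid n$. The key observation is that every coordinate of $\widetilde D_3$ is even: each $a_j,b_j,c_j$ occurs in $D_3$ with coefficient $j(j+1)=2\binom{j+1}{2}$, and $\deg D_3=n(n^2-1)=n(n-1)(n+1)$ is a product of three consecutive integers. Write $\widetilde D_3=2E$ with $E\in A_{3n-1}$; then $2E=\widetilde D_3\in\Lambda_n$ and $nE\in nA_{3n-1}\subseteq\Lambda_n$ give $\gcd(2,n)\,E\in\Lambda_n$, in particular $E\in\Lambda_n$ when $n$ is odd. I then claim that $M:=\langle\widetilde A,\widetilde B,\widetilde C,\widetilde D_1,\widetilde D_2,E\rangle_{\mathbb Z}$ is a primitive sublattice of $A_{3n-1}$ of rank $6$, i.e.\ its six generators stay $\mathbb F_p$-linearly independent in $A_{3n-1}/pA_{3n-1}\cong\mathbb F_p^{3n-1}$ for every prime $p$. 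One verifies this coordinate by coordinate: the $b_0$- and $c_0$-coordinates kill the $\widetilde B$- and $\widetilde C$-coefficients; the $a_j$-coordinates ($1\le j\le n-1$) give $\alpha+\delta j+\phi\binom{j+1}{2}\equiv 0$ in $\mathbb F_p$, which for odd $p$ is a polynomial of degree $\le2$ in $j$ with at least three roots (as $n\ge4$), hence identically zero, forcing $\phi=\delta=\alpha=0$, while for $p=2$ the residues $\binom{j+1}{2}\equiv 1,1,0\pmod2$ at $j=1,2,3$ force the same; and the $c_1$-coordinate then kills the $\widetilde D_2$-coefficient. Granting primitivity, extend $\widetilde A,\widetilde B,\widetilde C,\widetilde D_1,\widetilde D_2,E$ to a $\mathbb Z$-basis $g_1,\dots,g_6,f_7,\dots,f_{3n-1}$ of $A_{3n-1}$; since $\widetilde D_3=2g_6$,
\[
\Lambda_n=nA_{3n-1}+\langle g_1,\dots,g_5,2g_6\rangle=\bigoplus_{i=1}^{5}\mathbb Z g_i\ \oplus\ \gcd(2,n)\mathbb Z\, g_6\ \oplus\ \bigoplus_{i=7}^{3n-1}n\mathbb Z f_i ,
\]
and therefore $\operatorname{Cl}^0(\mathcal P)\cong\mathbb Z/\gcd(2,n)\mathbb Z\times(\mathbb Z/n\mathbb Z)^{3n-7}$, exactly the stated group.

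The main obstacle is this last step. The order falls out immediately from Theorem~\ref{propdr}, but it does not by itself distinguish $\mathbb Z/2\mathbb Z\times(\mathbb Z/n\mathbb Z)^{3n-7}$ from other abelian groups of the same order and exponent when $4\mid n$; what makes the $\mathbb Z/2\mathbb Z$ genuine is that the obvious generating set $\{\widetilde A,\dots,\widetilde D_3\}$ fails to be primitive --- precisely because $\widetilde D_3$ is divisible by $2$ in $A_{3n-1}$ --- so one must pass to $E=\tfrac{1}{2}\widetilde D_3$ and then establish primitivity of the modified system, the case $p=2$ being the delicate one since $\binom{j+1}{2}$, unlike $j(j+1)$, is not identically even. (The excluded case $n=3$ is not covered by the root-count argument, $n-1=2$ being too small; there $\mathcal F_3$ is elliptic and one checks $\operatorname{Cl}^0(\mathcal P)\cong(\mathbb Z/3\mathbb Z)^2$ directly, consistent with the formula.)
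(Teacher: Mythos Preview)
The paper does not give a proof of this statement at all: Theorem~\ref{teoclp} is quoted verbatim from Rohrlich \cite[Theorem~1]{david} and used as a black box (only to obtain the volume in Theorem~\ref{teovol}). So your proposal is not competing with a proof in the paper; you have supplied an argument where the paper has none.

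Your derivation is correct and self-contained modulo Theorem~\ref{propdr}. The identification $\operatorname{Cl}^0(\mathcal P)\cong A_{3n-1}/\Lambda_n$, the description $\Lambda_n=nA_{3n-1}+\langle\widetilde A,\dots,\widetilde D_3\rangle$, the relation count giving $|W_0|=n^6/\gcd(2,n)$, the observation that $\widetilde D_3=2E$ with $E\in A_{3n-1}$, and the primitivity check for $\langle\widetilde A,\widetilde B,\widetilde C,\widetilde D_1,\widetilde D_2,E\rangle$ via reduction mod~$p$ all go through as you describe; the Smith-type conclusion $A_{3n-1}/\Lambda_n\cong\mathbb Z/\gcd(2,n)\mathbb Z\times(\mathbb Z/n\mathbb Z)^{3n-7}$ follows. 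Two small remarks. First, the case $n=3$ need not be deferred to a separate check: your own argument already shows the exponent divides $n=3$ and the order equals $3^{2}$, which forces $(\mathbb Z/3\mathbb Z)^2$ with no further work. Second --- and this is a caveat about provenance rather than about your mathematics --- both inputs you use are cited from the same paper: Theorem~\ref{propdr} is \cite[Corollary~1]{david} and the statement you are proving is \cite[Theorem~1]{david}. In Rohrlich's exposition the corollary is in fact deduced from the theorem, so from the point of view of the original source your route is backwards; within the present paper, however, where both are taken as imported results, deriving one from the other is a legitimate and rather clean way to make Theorem~\ref{teoclp} internally verifiable.
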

Since $|\mathcal P|=3n$, an immediate application of Proposition \ref{propbasicas}(e), yields:
\begin{teorema}\label{teovol}
 The volume of $\Lambda_n$ is
 \[V(\Lambda_n)=\begin{cases}
 \begin{aligned}
 &\sqrt{3n}\cdot n^{3n-7},&\text{ if }&n\text{ is odd}\\
 &2\sqrt{3n}\cdot n^{3n-7},&\text{ if }&n\text{ is even.}
 \end{aligned}
\end{cases}\]
\end{teorema}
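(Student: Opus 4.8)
The plan is to apply Proposition \ref{propbasicas}(e) directly, once its hypothesis has been checked. First I would observe that, by Theorem \ref{teoclp} (equivalently \cite[Theorem 1]{david}), the group $\operatorname{Cl}^0(\mathcal P)$ is finite for the set $\mathcal P$ of ``points at infinity'' of $\mathcal F_n$; this is exactly the condition under which parts (d) and (e) of Proposition \ref{propbasicas} apply. Since $|\mathcal P| = 3n$, part (e) then gives
\[
V(\Lambda_n) = \sqrt{|\mathcal P|}\cdot h_\mathcal P = \sqrt{3n}\cdot h_\mathcal P,
\]
where $h_\mathcal P = |\operatorname{Cl}^0(\mathcal P)|$.

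Next I would simply read off $h_\mathcal P$ from Theorem \ref{teoclp}. If $n$ is odd, $\operatorname{Cl}^0(\mathcal P)\simeq \mathbb Z_n^{3n-7}$, so $h_\mathcal P = n^{3n-7}$; if $n$ is even, $\operatorname{Cl}^0(\mathcal P)\simeq \mathbb Z_n^{3n-7}\times\mathbb Z_2$, so $h_\mathcal P = 2\, n^{3n-7}$. Substituting each value into $V(\Lambda_n) = \sqrt{3n}\cdot h_\mathcal P$ yields precisely the two claimed formulas, $V(\Lambda_n) = \sqrt{3n}\,n^{3n-7}$ for $n$ odd and $V(\Lambda_n) = 2\sqrt{3n}\,n^{3n-7}$ for $n$ even.

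There is essentially no obstacle: the statement is a one-line corollary of two results already in hand (Proposition \ref{propbasicas}(e) and Theorem \ref{teoclp}). The only points worth making explicit are that the finiteness hypothesis of Proposition \ref{propbasicas}(e) is indeed satisfied here — guaranteed by R\"ohrlich's theorem — and that the generic ``$n$'' in Proposition \ref{propbasicas} denotes the number of chosen $K$-rational places, which in our situation is $3n$ rather than the Fermat degree. For completeness one may also recall the derivation of part (e), namely $V(\Lambda_n) = V(A_{3n-1})\cdot |A_{3n-1}/\Lambda_n| = \sqrt{3n}\cdot |\operatorname{Cl}^0(\mathcal P)|$, but no further computation is needed.
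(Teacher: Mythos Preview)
Your proposal is correct and matches the paper's own argument essentially verbatim: the paper simply notes $|\mathcal P|=3n$ and invokes Proposition~\ref{propbasicas}(e) together with Theorem~\ref{teoclp} to obtain the stated formulas. Your added remarks about verifying the finiteness hypothesis and distinguishing the two uses of ``$n$'' are helpful clarifications but introduce no new ideas.
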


\begin{observacao}
For $n =3$, Theorem \ref{teovol} agrees with the result given by \cite[Proposition 5.1]{minsha} for elliptic function fields.
\end{observacao}

\section{Final remarks}\label{secfinalrem}
We end our paper with some observations regarding our techniques, as well as, some further ramifications of our results.

First, it is worth noting that we have made our construction over $\mathbb{C}$, rather than over a finite field, as in the previous papers on the subject. Indeed, the construction of function field lattices works over any base field. In order to apply Proposition \ref{propbasicas} (d) and (e), one only needs that the group generated by the points in $\mathcal{P}$ is finite. Our Riemann-Roch space approach to compute the minimum distance works over any field $K$ such that the ``points at infinity'' are $K$-rational. It should be noted that for further results on the second minimum distance, for example, proofs for Theorems \ref{propdr} and \ref{teoclp} over fields other than $\mathbb{C}$ would be required.

Also, theoretically, the tools used in the proof of Theorem \ref{dismin2n} and Theorem \ref{kisnumn5} could be applied to compute the second minimum distance for every $n\geqslant5$. However, the number of sub-cases to be considered and discarded quickly grows as $n$ increases, making it far less practical. 

The results presented in this paper suggest that it is possible to obtain potentially nice lattice parameters by carefully choosing a relatively small subset of rational places $\mathcal P$ with some \emph{nice} geometric properties. It is natural to look for function fields with non-trivial automorphism groups, and for which $\mathcal O_\mathcal P^*$ is known for some particular choices of $\mathcal P$. It should thus be possible to obtain more examples of function field lattices whose minimum distance exceeds the expected bound. 



\section*{Acknowledgments}

The authors would like to thank Prof. Sueli Costa, for several insightful discussions on the subject of this article.

This study was financed in part by the Coordenação de Aperfeiçoamento de Pessoal de Nível Superior - Brasil (CAPES) - Finance Code 001. The first author is funded by the grant 88887.962454/2024-00.
The second author was partially supported by FAEPEX grant 2597/25.

\end{document}